\theoremstyle{plain}
\newtheorem{corollary}{Corollary}[section]
\newtheorem*{corollary*}{Corollary}
\newtheorem{lemma}{Lemma}[section]
\newtheorem{theorem}{Theorem}[section]
\newtheorem*{thma}{Theorem A}
\theoremstyle{remark}
\newtheorem{remark}{Remark}
\newtheorem*{remark*}{Remark}
\newtheorem*{remarks*}{Remarks}
\theoremstyle{definition}
\newtheorem{example}{Example}
\newtheorem*{example*}{Example}
\numberwithin{equation}{section}
\newcommand\R{\mathbb{R}}
\newcommand\wh{\widehat}
\newcommand\ol{\overline}
\newcommand{\T}{\mathbb{{T}}}
\newcommand{\N}{\mathbb{{N}}}
\newcommand\Z{\mathbb{Z}}
\renewcommand\ll{\lesssim}
\renewcommand\gg{\gtrsim}
\newcommand\supp{\operatorname{supp}}
\renewcommand\Re{\operatorname{Re}}
\renewcommand\Im{\operatorname{Im}}
\begin{document}

\title[Moduli of smoothness and Fourier transforms]
{Moduli of smoothness and growth properties of Fourier transforms: two-sided estimates}
\author{D.~Gorbachev}
\address{D.~Gorbachev, Tula State University,
Department of Mechanics and Mathematics,
300600 Tula, Russia}
\email{dvgmail@mail.ru}

\author{S.~Tikhonov}
\address{S.~Tikhonov, ICREA and Centre de Recerca Matem\`{a}tica,
Apartat 50~08193 Bellaterra, Barce\-lona, Spain}
\email{stikhonov@crm.cat}

\thanks{This research was partially supported by the
MTM 2011-27637, RFFI 10-01-00564, RFFI 12-01-00170, 2009 SGR 1303.
The results of the paper were presented at the conference on Function spaces in CRM (Barcelona) in September 2011.
}

\date{March 9, 2010}
\subjclass[2000]{42B10, 26A15} \keywords{Fourier transforms, moduli of smoothness, Pitt's inequality}

\begin{abstract}
We prove two-sided inequalities between the integral moduli of smoothness of a
function on $\R^d/\T^d$ and the weighted tail-type integrals of its Fourier
transform/series.
 Sharpness of obtained results in particular is given by the
equivalence results for functions satisfying certain regular conditions.
Applications include a quantitative form of the Riemann–Lebesgue lemma as well
as several other questions in approximation theory and the theory of function
spaces.
\end{abstract}
\maketitle

\vspace{10mm}

\section{Introduction}
\label{sec-intr}

This paper studies the interrelation between the smoothness of a function and growth
properties of Fourier transforms/coefficients. Let us first recall the classical
Riemann--Lebesgue lemma: $|\wh{f}_n|\to 0$ as $|n|\to \infty$, where
$f\in L^1(\mathbb{T}^d)$. Its quantitative version, the Lebesgue type estimate
for the Fourier coefficients, is well known \cite[Vol.~I, Ch.~4, \S~4]{Zy} and
given by
\begin{equation}\label{fc}
|\wh{f}_n|\ll \omega_l\left(f, \frac{1}{|n|}\right)_1, \quad f\in L^1(\mathbb{T}^d),
\end{equation}
where the modulus of smoothness $\omega_l(f,\delta)_p$ of a function $f\in L^{p}(X)$
is defined by
\begin{equation}\label{mod}
\omega_l\left(f,\delta\right)_{p} = \sup_{|h|\le\delta}
\left\| \Delta^l_h f (x) \right\|_{L^p(X)},\quad 1\le p \le \infty,
\end{equation}
and
\[
\Delta^l_h f (x) = \Delta^{l-1}_h\left(\Delta_h f (x)\right),
\qquad \Delta_h f (x)=f(x+h)-f(x).
\]

For the Fourier transform, the estimate similar to \eqref{fc} can be found in, e.g., \cite{Tr}
\begin{equation}\label{fccc}
|\wh{f}(\xi)|\ll \omega_l\left(f,\frac{1}{|\xi|}\right)_1, \quad f\in L^1(\R^d),
\end{equation}
where the Fourier transform is given by
\begin{equation}\label{intF}
\wh f(\xi)=\int_{\R^d} f(x) e^{i\xi x}\,dx,\quad \xi\in \R^{d}.
\end{equation}
However, unlike \eqref{fc} the inequality \eqref{fccc} cannot be extended for the range $p>1$ (see section \ref{RLTR} below).

Very recently, Bray and Pinsky \cite{BP,BP2} and Ditzian \cite{Di} (see also
Gioev's paper \cite{G}) extended Lebesgue type estimate for the Fourier
transform/coefficients. We will need the following avarage function. For a
locally integrable function $f$ the average on a sphere in $\R^{d}$ of radius
$t>0$ is given by
\[
V_t f(x):=\frac{1}{m_t}\int_{|y-x|=t} f(y)\,dy\quad \text{with}\quad V_t
1=1,\quad d\ge 2.
\]
For $l\in \N$ we define
\[
V_{l,t}f (x):=\frac{-2}{\binom{2l}{l}} \sum_{j=1}^l (-1)^j
\binom{2l}{l-j} V_{jt} f(x).
\]

\begin{thma}
Let $f\in L^p(\R^d)$, $d\ge 2$, and $1\le p \le 2$, $1/p+1/p'=1$. Then
for $t>0$, $l\in \N$,
\begin{equation}\label{bray}
\left(
\int_{\R^d}\left[\min(1,t|\xi|)^{2l} |\wh{f}(\xi)|\right]^{p'}d\xi
\right)^{1/p'}\ll
\|f - V_{l,t} f\|_p, \quad 1< p \le 2,
\end{equation}
and
\begin{equation}\label{bray-in}
\sup_{\xi\in \R^{d}}\left[\min(1,t|\xi|)^{2l} |\wh{f}(\xi)|\right]\ll
\|f - V_{l,t} f\|_1.
\end{equation}
\end{thma}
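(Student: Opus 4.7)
My plan is to pass to the Fourier side, where $I-V_{l,t}$ becomes multiplication by a concrete radial symbol $\Phi(t|\xi|)$, and then combine a pointwise lower bound for $\Phi$ with the Hausdorff--Young inequality. Bochner's formula for spherical means gives $\wh{V_t f}(\xi)=j_B(t|\xi|)\,\wh f(\xi)$, where $j_B(r):=\Gamma(d/2)(2/r)^{(d-2)/2}J_{(d-2)/2}(r)$ satisfies $j_B(0)=1$ and $j_B(r)=O(r^{-(d-1)/2})$ at infinity. Consequently
\[
\wh{(f-V_{l,t}f)}(\xi)=\Phi(t|\xi|)\,\wh f(\xi),\qquad
\Phi(r):=1+\frac{2}{\binom{2l}{l}}\sum_{j=1}^l(-1)^j\binom{2l}{l-j}j_B(jr).
\]

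The key multiplier estimate is the claim $|\Phi(r)|\gg\min(1,r)^{2l}$ for all $r>0$. For small $r$, expanding each $j_B(jr)$ in its Taylor series makes the coefficient of $r^{2k}$ in $\Phi(r)$ proportional to $\sum_{j=1}^l(-1)^j\binom{2l}{l-j}j^{2k}$; by the classical finite-difference identity $\sum_{m=0}^{2l}(-1)^m\binom{2l}{m}p(m)=0$ for polynomials $p$ of degree less than $2l$, this sum vanishes for $k=0,1,\dots,l-1$ and is nonzero for $k=l$, so $\Phi(r)=c_l r^{2l}+O(r^{2l+2})$ with $c_l\neq 0$. For large $r$, Bessel decay yields $\Phi(r)=1+O(r^{-(d-1)/2})$, hence $|\Phi(r)|\ge 1/2$ past some threshold $R_0$. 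Once the pointwise bound is in hand, Hausdorff--Young applied to $g=f-V_{l,t}f$ yields, for $1<p\le 2$,
\[
\Bigl(\int_{\R^d}\bigl[\min(1,t|\xi|)^{2l}|\wh f(\xi)|\bigr]^{p'}d\xi\Bigr)^{1/p'}
\ll\|\Phi(t|\cdot|)\wh f\|_{p'}=\|\wh{(f-V_{l,t}f)}\|_{p'}\ll\|f-V_{l,t}f\|_p,
\]
and the $p=1$ endpoint \eqref{bray-in} follows analogously from $\|\wh g\|_\infty\le\|g\|_1$.

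The principal obstacle is uniform control of $\Phi$ on the intermediate compact range $[r_0,R_0]$ bridging the two asymptotic regimes: since $j_B$ oscillates with only polynomial decay, it is not a priori clear that the combination $\Phi$ stays away from zero there. In the one-dimensional analogue $\Phi$ collapses to $|2\sin(r/2)|^{2l}/\binom{2l}{l}$ and really does vanish at integer multiples of $2\pi$, so the uniform bound is a genuinely higher-dimensional statement about sums of Bessel functions. I would attack this via the Gegenbauer-type integral representation of $j_B(r)$ as a weighted average of $\cos(r\cos\theta)$, which turns $\Phi(r)$ into an average over $\theta$ of one-dimensional symmetric-difference symbols that, by the specific combinatorial choice of coefficients $\binom{2l}{l-j}$, are nonnegative. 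Alternatively, one can replace $V_{l,t}$ by an equivalent averaging operator with a manifestly positive symbol and then check $L^p$-equivalence of the corresponding error terms.
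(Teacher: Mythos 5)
Your argument is correct, but it is not the route the paper takes. You work directly with the Fourier symbol of $I-V_{l,t}$: writing $\wh{(f-V_{l,t}f)}(\xi)=\Phi(t|\xi|)\wh f(\xi)$, you reduce everything to the pointwise lower bound $\Phi(r)\gg\min(1,r)^{2l}$ and then apply Hausdorff--Young to $f-V_{l,t}f$ (with the trivial $\|\wh g\|_\infty\le\|g\|_1$ at $p=1$). That lower bound is genuine and known: it is exactly Lemma~3.4 of Dai--Ditzian, which the paper itself quotes in the form $1-m_l(s)\asymp\min(1,s)^{2l}$ in the proof of Theorem~\ref{t3}~(A), and your Gegenbauer plan does close the gap you flag, since
\[
\Phi(r)=\frac{c_d}{\binom{2l}{l}}\int_0^\pi\Bigl(2\sin\tfrac{r\cos\theta}{2}\Bigr)^{2l}\sin^{d-2}\theta\,d\theta ,
\]
which is $\asymp r^{2l}$ for $r\le 1$, is continuous and strictly positive for $r>0$ (it can vanish only if $\sin(r\cos\theta/2)$ vanishes for a.e.\ $\theta$), hence bounded below on any compact range, and stays $\gg1$ for large $r$ either by your asymptotic $1+O(r^{-(d-1)/2})$ or by the equidistribution of the zeros of $\sin(ru/2)$ in $u$. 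The paper instead obtains Theorem~A as the case $q=p'$ of Theorem~\ref{t1}~(A): it never touches the symbol $\Phi$ directly, but uses the Dai--Ditzian realization $\|f-V_{l,t}f\|_p\asymp\|f-R_{\lambda,l,b}(f)\|_p+t^{2l}\|\Delta^l R_{\lambda,l,b}(f)\|_p$ with the Riesz-type means $\eta_{l,b}(t|\xi|)=(1-(t|\xi|)^{2l})_+^b$, splits $\min(1,t|\xi|)^{2l}\asymp[1-\eta_{l,b}]+(t|\xi|)^{2l}\eta_{l,b}$, and applies Pitt's inequality (which at $q=p'$, $\gamma=0$ is Hausdorff--Young) to each piece. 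Your approach is shorter and more self-contained for this particular statement (it is essentially the original Bray--Pinsky/Ditzian argument), while the paper's decomposition is built to carry over verbatim to the weighted $L^q$ estimates of Theorem~\ref{t1} and, via the reverse Pitt inequality, to the converse bound (B), where a one-sided pointwise estimate on $\Phi$ would not suffice.
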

Similar results were also proved for moduli of smoothness of functions on $\R$ and $\T^d$ (see \cite{Di}).
 In the rest of the paper we will assume that $t>0$, $l\in \N$, and
\begin{equation}\label{Ot-d2}
\Omega_{l}(f,t)_{p}=\|f - V_{l,t} f\|_p,\quad \theta=2,
\end{equation}
if $d\ge 2$ and
\begin{equation}\label{Ot-d1}
\Omega_{l}(f,t)_{p}=\omega_l(f,t)_p,\quad \theta=1
\end{equation}
if $d=1$.

 The main goal of this paper is to extend inequalities \eqref{bray} and \eqref{bray-in} in the following sense.
 First, we prove sharper estimates by considering the weighted $L^q$ norm of $\min(1,t|\xi|)^{\theta l} |\wh{f}(\xi)|$, that is,
\begin{equation}\label{mod-f101}
\Big\|\min(1,t|\xi|)^{\theta l} |\wh{f}(\xi)|\Big\|_{L^q(u)}\ll \Omega_{l}(f,t)_{p}, \qquad p\le q
  \end{equation}
with the certain weight function $u$. Then varying the parameter $q$ gives
us the better bound from below of $\Omega_{l}(f,t)_{p}$. In particular, if
$q=p'$ we arrive at \eqref{bray} and \eqref{bray-in}.

Second, we prove the reverse inequalities
showing how smoothness of a function depends on the average decay of its
Fourier transform:
\begin{equation}\label{mod-f102}
\Omega_{l}(f,t)_{p} \ll \Bigl\|\min(1,t|\xi|)^{\theta l} |\wh{f}(\xi)|\Bigr\|_{L^q(u)}, \qquad q\le p,
\end{equation}
Third, we define the class of general monotone functions and prove that for
this class the equivalence result holds:
\begin{equation}\label{mod-f103}
 \Omega_{l}(f,t)_{p} \asymp \Big\|\min(1,t|\xi|)^{\theta l} |\wh{f}(\xi)|\Big\|_{L^p(u)}.
\end{equation}
Note that for $p=2$, this follows from (\ref{mod-f101}) and (\ref{mod-f102}) in the general case (see also \cite{BP, G}).

The paper is organized as follows. In Section 2, we prove inequalities
\eqref{mod-f101} and \eqref{mod-f102} when $1< p\le 2$ and $p\ge 2$
respectively. In Section 3 we study inequalities \eqref{mod-f101} and
\eqref{mod-f102} in the case of radial functions and we show that, with a fixed
$p$, the range of the parameter $q$ is extended. In Section 4 we deal with the
general monotone functions. Again, we prove inequalities \eqref{mod-f101} and
\eqref{mod-f102} under wider range of the parameter $q$ than in the case of
radial functions. Moreover, we show equivalence \eqref{mod-f103} in this case.
Section~5 studies inequalities \eqref{mod-f101} and \eqref{mod-f102} for
functions on $\T^d$, $d\ge 1.$
In Section~6 we obtain the equivalence
result of type \eqref{mod-f103} for periodic functions
whose sequence of Fourier coefficients is general monotone. Section 7 considers several application of
obtained results in approximation theory (sharp relations between best
approximations and moduli of smoothness) and functional analysis (embedding
theorems, characterization of the Lipschitz/Besov spaces in terms of the
Fourier transforms).

Finally, we remark that inequalities between moduli of smoothness and the
Fourier transform in the Lebesgue and Lorentz spaces were studied earlier in
\cite{cline} and \cite{GK}.

\section{Growth of Fourier transforms via moduli of smoothness. The general case}
\label{sec-gc}

The following theorem is the main result of this section.

\begin{theorem}\label{t1}
Let $f\in L^{p}(\R^{d})$, $d\ge 1$.

\smallbreak
\noindent
\textup{(A)} Let $1<p\le 2$. Then for $p\le q\le p'$ we have
$|\xi|^{d(1-1/p-1/q)}\wh{f}(\xi)\in L^{q}(\R^{d})$, and
\begin{equation}\label{bray1}
\left(
\int_{\R^d}\left[\min(1,t|\xi|)^{\theta l} |\xi|^{d(1-1/p-1/q)} |\wh{f}(\xi)|\right]^{q}\,d\xi
\right)^{1/q}\ll
\Omega_l(f,t)_p.
\end{equation}

\smallbreak
\noindent
\textup{(B)} Let $2\le p<\infty$, $|\xi|^{d(1-1/p-1/q)}\wh{f}(\xi)\in
L^{q}(\R^{d})$, $q>1$, and $\max\left\{q,q'\right\}\le p$. Then
\begin{equation}\label{bray2}
\left(
\int_{\R^d}\left[\min(1,t|\xi|)^{\theta l} |\xi|^{d(1-1/p-1/q)} |\wh{f}(\xi)|\right]^{q}\,d\xi
\right)^{1/q}\gg
\Omega_l(f,t)_p.
\end{equation}
\end{theorem}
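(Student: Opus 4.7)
The plan is to derive both parts from Pitt's inequality
\[
\Bigl(\int_{\R^d}|\wh g(\xi)|^q|\xi|^{qd(1-1/p-1/q)}\,d\xi\Bigr)^{1/q}\ll\|g\|_p\qquad(1<p\le 2,\ p\le q\le p'),
\]
which interpolates Hausdorff--Young ($q=p'$) with the Hardy--Littlewood--Paley inequality ($q=p$), combined with the key pointwise multiplier estimate
\[
|1-\wh V_{l,t}(\xi)|\asymp\min(1,t|\xi|)^{2l}\quad(d\ge 2),\qquad |e^{i\xi h}-1|^l\asymp\min(1,|\xi h|)^l\quad(d=1),
\]
so that the multiplier produced by $f\mapsto f-V_{l,t}f$ (respectively $f\mapsto\Delta_h^l f$) is pointwise comparable to $\min(1,t|\xi|)^{\theta l}$.

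For part (A) with $d\ge 2$ the argument is direct: apply Pitt to $g=f-V_{l,t}f$, use $\wh g=(1-\wh V_{l,t})\wh f$, and invoke the lower bound on the multiplier to dominate $\min(1,t|\xi|)^{2l}|\wh f|$ pointwise by $|\wh g|$; the right-hand side of Pitt is by definition $\Omega_l(f,t)_p$. For $d=1$ I would average over $h$: apply Pitt to $\Delta_h^l f$ for each $h\in[t/2,t]$, raise to the $q$-th power, integrate in $h$ and swap by Fubini, then invoke the elementary estimate $\int_{t/2}^t|e^{i\xi h}-1|^{lq}\,dh\asymp t\min(1,|\xi t|)^{lq}$ together with $\sup_{|h|\le t}\|\Delta_h^l f\|_p=\omega_l(f,t)_p$ on the right.

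For part (B) I would argue by duality. Write $\|f-V_{l,t}f\|_p=\sup\{|\langle f-V_{l,t}f,v\rangle|:\|v\|_{p'}\le 1\}$ (with an extra supremum over $|h|\le t$ inserted in the $d=1$ case), and use Parseval to express the pairing as an integral of $(1-\wh V_{l,t})\wh f\cdot\overline{\wh v}$, respectively $(e^{i\xi h}-1)^l\wh f\cdot\overline{\wh v}$. Apply H\"older with exponents $(q,q')$, splitting the trivial weight $|\xi|^{d(1-1/p-1/q)}\cdot|\xi|^{-d(1-1/p-1/q)}=1$. The first H\"older factor is controlled by the upper bound on the multiplier, and the second factor $\||\xi|^{-d(1-1/p-1/q)}\wh v\|_{q'}$ is bounded by $\|v\|_{p'}$ via Pitt's inequality applied to $v$ with the dual parameters $(p',q')$. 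This is legitimate exactly when $1<p'\le 2$ and $p'\le q'\le p$, which translate to the stated hypotheses $p\ge 2$ and $\max\{q,q'\}\le p$.

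The main obstacle is the sharp two-sided pointwise bound $|1-\wh V_{l,t}(\xi)|\asymp\min(1,t|\xi|)^{2l}$ in dimension $d\ge 2$. The upper bound follows from a Taylor expansion of the normalized Bessel function $j_{d/2-1}$ at the origin together with its uniform boundedness at infinity. The lower bound is more delicate because $\wh V_{l,t}$ is an oscillatory linear combination of Bessel multipliers at scales $jt|\xi|$, $1\le j\le l$; one has to verify that the combinatorial weights in the definition of $V_{l,t}$ force $1-\wh V_{l,t}$ to vanish to order exactly $2l$ at the origin and to stay uniformly bounded below for $t|\xi|\gtrsim 1$, ruling out spurious near-cancellations at intermediate frequencies. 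Once this pointwise estimate is in hand, the rest of the argument reduces to a clean application of Pitt--H\"older duality.
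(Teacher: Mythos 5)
Your plan is correct, but it follows a genuinely different route from the paper's. In this theorem the paper never touches the multiplier $m_l$ of $V_{l,t}$ pointwise: instead it invokes the Dai--Ditzian realization \eqref{v}, $\|f-V_{l,t}f\|_p\asymp\|f-R_{\lambda,l,b}(f)\|_p+t^{2l}\|\Delta^l R_{\lambda,l,b}(f)\|_p$, splits $\min(1,s)^{2l}\asymp\bigl[1-\eta_{l,b}(s)\bigr]+s^{2l}\eta_{l,b}(s)$ (trivial for the Riesz-type means $\eta_{l,b}$), and applies Pitt's inequality \eqref{PittIneq}, resp.\ its dual form \eqref{PittIneq2}, to the two pieces; for $d=1$ it uses the realization with the partial integral $S_{\lambda}(f)$ and boundedness of the Hilbert transform. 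You instead work with the multiplier of $V_{l,t}$ itself: part (A) is Pitt applied to $f-V_{l,t}f$ together with $1-m_l(s)\asymp\min(1,s)^{2l}$, and part (B) is the same estimate run through duality (Parseval, H\"older with exponents $(q,q')$, Pitt for the test function), which after unwinding is precisely the paper's inequality \eqref{PittIneq2} and yields the same conditions $\max\{q,q'\}\le p$ (your extra proviso $1<p'\le 2$ is not required by Pitt, and is anyway implied since $\max\{q,q'\}\ge 2$). The pointwise bound you flag as the main obstacle is not something you need to reprove: it is \cite[Lemma~3.4]{DD}, quoted verbatim in the paper's proof of Theorem~\ref{t3}(A), namely $C_1s^{2l}\le 1-m_l(s)\le C_2s^{2l}$ for $0<s\le\pi$ and $0<m_l(s)\le v_{d,l}<1$ for $s\ge\pi$. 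So both arguments rest on nontrivial input from \cite{DD}, yours on the multiplier estimate and the paper's on the realization \eqref{v}, and both reduce to the same Pitt inequality; your averaging over $h\in[t/2,t]$ in $d=1$, via $\int_{t/2}^t|e^{i\xi h}-1|^{lq}\,dh\asymp t\min(1,t|\xi|)^{lq}$, is a nice elementary alternative to the paper's realization/Hilbert-transform step. Two small additions you should make: the membership claim $|\xi|^{d(1-1/p-1/q)}\wh f\in L^{q}$ in (A) follows by applying Pitt directly to $f$ itself, and in (B) the Parseval pairing should be justified by taking $v$ Schwartz (dense in the unit ball of $L^{p'}$), the hypothesis $|\xi|^{d(1-1/p-1/q)}\wh f\in L^{q}$ making the relevant integrals absolutely convergent, which matches the level of care the paper takes in its remark on how $\wh f$ is understood.
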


\begin{remark*}
Theorem~A follows from Theorem \ref{t1}~(A) (take $q=p'$). In part (B) we
assume that for $f\in L^{p}(\R^{d})$ the Fourier transform $\wh{f}$ is well
defined and such that
$|\xi|^{d(1-1/p-1/q)}\wh{f}(\xi)\in L^{q}(\R^{d})$ for a certain $q>1$
satisfying $\max\left\{q,q'\right\}\le p$.
\end{remark*}

\begin{proof}[Proof of Theorem \ref{t1}]
We will use the following Pitt's inequality \cite{BH} (see also \cite{GLT}):
\begin{equation}\label{PittIneq}
\left(\int_{\R^{d}}\left(|\xi|^{-\gamma}|\wh{g}(\xi)|\right)^{q}\,d\xi\right)^{1/q}\ll
\left(\int_{\R^{d}}\left(|x|^{\beta}|g(x)|\right)^{p}\,dx\right)^{1/p},
\end{equation}
where
\begin{equation}\label{PittIneq-cond}
\beta-\gamma=d\left(1-\frac{1}{p}-\frac{1}{q}\right),\quad
\max\left\{0,d\left(\frac{1}{p}+\frac{1}{q}-1\right)\right\}\le
\gamma<\frac{d}{q},\quad 1<p\le q<\infty.
\end{equation}
Here the Fourier transform $\wh{g}$ is understood in the usual sense of
weighted Fourier inequality \eqref{PittIneq}; see, e.g., \cite[Sect.~1, 2]{BL}.

Let us write inequality \eqref{PittIneq} with change of parameters $\wh{g}\leftrightarrow f$,
$p\leftrightarrow q$, $\beta\leftrightarrow -\gamma$. Let
$|\xi|^{-\gamma}\wh{f}(\xi)\in L^{q}(\R^{d})$, then
\begin{equation}\label{PittIneq2}
\left(\int_{\R^{d}}\left(|\xi|^{-\gamma}|\wh{f}(\xi)|\right)^{q}\,d\xi\right)^{1/q}\gg
\left(\int_{\R^{d}}\left(|x|^{\beta}|f(x)|\right)^{p}\,dx\right)^{1/p},
\end{equation}
where
\begin{equation}\label{PittIneq2-cond}
\beta-\gamma=d\left(1-\frac{1}{p}-\frac{1}{q}\right),\quad
\max\left\{0,d\left(\frac{1}{p}+\frac{1}{q}-1\right)\right\}\le
-\beta<\frac{d}{p},\quad 1<q\le p<\infty.
\end{equation}

\subsubsection*{The case of $d\ge 2$.} Then by \eqref{Ot-d2},
$\Omega_{l}(f,t)_{p}=\|f - V_{l,t} f\|_p$, $\theta=2$. Let us write the
left-hand side in \eqref{bray1} and \eqref{bray2} as
\[
I:=\left\|\min(1,t|\xi|)^{2l}h(\xi)\right\|_{q},\qquad
h(\xi)=|\xi|^{d(1-1/p-1/q)}|\wh{f}(\xi)|.
\]

In \cite[Cor.~2.3, Th.~3.1]{DD}, it is shown that for $f\in
L^p(\R^d)$, $1\le p\le \infty$, $t>0$, and integer~$l,$
\begin{equation}\label{v}
\|f - V_{l,t} f\|_p\asymp
K_l(f,\Delta, t^{2l})_p\asymp
R_l(f,\Delta, t^{2l})_p,
\end{equation}
where
\begin{equation*}
K_l(f,\Delta, t^{2l})_p:= \inf\big\{
\|f-g\|_p+t^{2l}\|\Delta^lg\|_p\colon \Delta^lg\in L^p(\R^d)
\big\},
\end{equation*}
the Laplacian is given by $\Delta=\frac{\partial^{2}}{\partial
x_1^{2}}+\dots+\frac{\partial^{2}}{\partial x_d^{2}}$,
\begin{equation}\label{v1-new}
\begin{gathered}
R_l(f,\Delta, t^{2l})_p:= \|f-R_{\lambda,l,b}(f)\|_p+t^{2l}\|\Delta^l
R_{\lambda,l,b}(f)\|_p,
\\
\lambda=1/t,\quad b\ge d+2.
\end{gathered}
\end{equation}
Here (see \cite[Sec. 2]{DD})
\[
R_{\lambda,l,b}(f)(x)=(G_{\lambda,l,b}*f)(x),\qquad
G_{\lambda,l,b}(x)=\lambda^{d}G_{l,b}(\lambda x),\quad
\wh{G_{l,b}}(\xi)=\eta_{l,b}(|\xi|),
\]
where
\begin{equation}\label{defeta}
\eta_{l,b}(s)=(1-s^{2l})_{+}^{b},\quad s=|\xi|\ge 0,
\end{equation}
and
\begin{equation}\label{hatR}
\begin{gathered}
\left[R_{\lambda,l,b}(f)\right]\!\wh{\;\;}(\xi)=\eta_{l,b}(t|\xi|)\wh{f}(\xi),\quad
\left[f-R_{\lambda,l,b}(f)\right]\!\wh{\;\;}(\xi)=\left[1-\eta_{l,b}(t|\xi|)\right]\!\wh{f}(\xi),
\\
\left[\Delta^l
R_{\lambda,l,b}(f)\right]\!\wh{\;\;}(\xi)
=
(-1)^{l}|\xi|^{2l}\left[R_{\lambda,l,b}(f)\right]\!\wh{\;\;}(\xi)=
(-1)^{l}|\xi|^{2l}\eta_{l,b}(t|\xi|)\wh{f}(\xi).
\end{gathered}
\end{equation}
Also,
\begin{equation}\label{nG1}
\|G_{\lambda,l,b}(x)\|_{1}=\|G_{l,b}\|_{1}<\infty.
\end{equation}

Taking into account that, for $b>0$,
\[
\eta_{l,b}(s)\sim 1-bs^{2l},\quad s\to 0,\qquad
\eta_{l,b}(s)=0,\quad s\ge 1,
\]
we obtain
\begin{equation}\label{etamin}
1-\eta_{l,b}(s)\asymp \min(1,s)^{2l},\quad s\ge 0.
\end{equation}
Changing variables $b \leftrightarrow b+1$ gives
\begin{equation*}
\min(1,s)^{2l}\asymp 1-\eta_{l,b+1}(s)=1-(1-s^{2l})\eta_{l,b}(s)=
1-\eta_{l,b}(s)+s^{2l}\eta_{l,b}(s).
\end{equation*}
Therefore,
\begin{equation}\label{I}
I=\left\|\min(1,t|\xi|)^{2l}h(\xi)\right\|_{q}\asymp
\left\|\left[1-\eta_{l,b}(t|\xi|)+(t|\xi|)^{2l}\eta_{l,b}(t|\xi|)\right]h(\xi)\right\|_{q}.
\end{equation}
Define
\begin{equation}\label{h1h2}
h_{1}(\xi)=\left[1-\eta_{l,b}(t|\xi|)\right]h(\xi),\quad
h_{2}(\xi)=(t|\xi|)^{2l}\eta_{l,b}(t|\xi|)h(\xi).
\end{equation}
Note that both $h_1$ and $h_2$ are non-negative. For non-negative functions we
have
\begin{equation}\label{h1h2eq}
\|h_{1}+h_{2}\|_{q}\asymp \|h_{1}\|_{q}+\|h_{2}\|_{q},\quad 1\le q\le \infty.
\end{equation}
This, \eqref{I}, and \eqref{h1h2} yield
\[
I\asymp
\left\||\xi|^{d(1-1/p-1/q)}\left[1-\eta_{l,b}(t|\xi|)\right]|\wh{f}(\xi)|\right\|_{q}+
\left\||\xi|^{d(1-1/p-1/q)}(t|\xi|)^{2l}\eta_{l,b}(t|\xi|)|\wh{f}(\xi)|\right\|_{q},
\]
or, by \eqref{hatR},
\begin{equation}\label{Iq}
I\asymp
\left\||\xi|^{d(1-1/p-1/q)}\left|\left[f-R_{\lambda,l,b}(f)\right]\!\wh{\;\;}(\xi)\right|\right\|_{q}+
t^{2l}\left\||\xi|^{d(1-1/p-1/q)}\left|\left[\Delta^l
R_{\lambda,l,b}(f)\right]\!\wh{\;\;}(\xi)\right|\right\|_{q}.
\end{equation}

Now to prove (A), we assume that $p\le q$ and we use \eqref{Iq} and Pitt's inequality \eqref{PittIneq} with $\beta=0$.
In this case
$\gamma=d\left(\frac{1}{p}+\frac{1}{q}-1\right)$ and $\gamma\ge 0$ (see \eqref{PittIneq-cond}).
The latter is ensured by
 $q\le p'$. Then $|\xi|^{d(1-1/p-1/q)}\wh{f}(\xi)\in L^{q}(\R^{d})$ and
\[
I\ll
\left\|f-R_{\lambda,l,b}(f)\right\|_{p}+t^{2l}\left\|\Delta^l
R_{\lambda,l,b}(f)\right\|_{p}.
\]
Combining this with \eqref{v}, and \eqref{v1-new} we get~(A).

In part (B) we assume that $q\le p$. Inequality \eqref{bray2} follows from
\eqref{Iq} and inequality \eqref{PittIneq2} for $\beta=0$. In this case, by
\eqref{PittIneq2-cond}, $\gamma=d\left(\frac{1}{p}+\frac{1}{q}-1\right)$ and
$\max\{0,\gamma\}\le 0$, i.e., $\gamma\le 0$. The latter is $q\ge p'$ or,
equivalently, $q'\le p$.

\subsubsection*{The case of $d=1$.} According to \eqref{Ot-d1}, we have
$\Omega_{l}(f,t)_{p}=\omega_l(f,t)_p$ and $\theta=1.$ The proof of key steps is
similar to the proof in the case of $d\ge 2$. The only difference is the
realization result (\cite{DIT}) given~by
\[
\omega_{l}(f,t)_{p}
\asymp
\inf\left(\|f-g\|_{p}+t^{l}\|g^{(l)}\|_{p}\colon
g^{(l)}\in \mathrm{E}_{\lambda}\cap L^{p}(\R)\right)\asymp
\|f-g_{\lambda}\|_{p}+t^{l}\|g_{\lambda}^{(l)}\|_{p},\quad \lambda=1/t,
\]
where $\mathrm{E}_{\lambda}$ is the collection of all entire functions of
exponential type~$\lambda$ and $g_{\lambda}\in \mathrm{E}_{\lambda}$ is such
that
\[
\|f-g_{\lambda}\|_p\ll E_\lambda(f)_p :=\inf_{g\in \mathrm{E}_{\lambda}}\|f-g\|_p.
\]
Since $\|g_{\lambda}^{(l)}\|_{p}\asymp \|Hg_{\lambda}^{(l)}\|_{p}$,
$1<p<\infty$, where $H$ is the Hilbert transform \cite[Ch.~5]{Tit}, then
$\omega_{l}(f,t)_{p} \asymp
\|f-g_{\lambda}\|_{p}+t^{l}\|D_{l}g_{\lambda}\|_{p}$, where $D_{l}=(id/dx)^{l}$
for even $l$ and $D_{l}=-iH(id/dx)^{l}$ for odd $l$.

Let $\chi_{\lambda}:=\chi_{[0,\lambda]}$. As Hille and Tamarkin \cite{HT}
showed, if $S_{\lambda}(f)$ is the partial Fourier integral of~$f$,~i.e.,
\begin{equation}\label{S1}
\left[S_{\lambda}(f)\right]\!\wh{\;\;}(\xi)=\chi_{\lambda}(|\xi|)\wh{f}(\xi),
\end{equation}
we have
\[
\|S_{\lambda}(f)\|_p\ll\|f\|_p, \quad 1<p<\infty.
\]
Then (see also \cite{Ti}) $g_{\lambda}$ can be taken as $S_{\lambda}(f)$, that
is, $\|f-S_{\lambda}(f)\|_p\ll E_\lambda(f)_p$. Therefore, for $1<p<\infty$,
\begin{equation}\label{omega2}
\omega_{l}(f,t)_{p}\asymp
\left\|f-S_{\lambda}(f)\right\|_{p}+t^{l}\bigl\|S_{\lambda}^{(l)}(f)\bigr\|_{p}
\asymp
\left\|f-S_{\lambda}(f)\right\|_{p}+t^{l}\left\|D_{l}S_{\lambda}(f)\right\|_{p},
\end{equation}
where
\begin{equation}\label{S2}
\bigl[S_{\lambda}^{(l)}(f)\bigr]\!\wh{\;\;}(\xi)=(-i\xi)^{l}\chi_{\lambda}(|\xi|)\wh{f}(\xi),\qquad
\left[D_{l}S_{\lambda}(f)\right]\!\wh{\;\;}(\xi)=|\xi|^{l}\chi_{\lambda}(|\xi|)\wh{f}(\xi).
\end{equation}

For $s\ge 0$ we have $\min(1,s)^{l}=1-\chi_{1}(s)+s^{l}\chi_{1}(s)$ and $\chi_{1}(ts)=\chi_{\lambda}(s)$, which gives
\begin{equation}\label{thm1-min}
\min(1,ts)^{l}=1-\chi_{\lambda}(s)+(ts)^{l}\chi_{\lambda}(s).
\end{equation}
This, \eqref{h1h2eq}, \eqref{S1}, and \eqref{S2} imply
\begin{multline}\label{IS}
I:=\left\|\min(1, t|\xi|)^{l} |\xi|^{1-1/p-1/q} |\wh{f}(\xi)|\right\|_{q}=
\left\|\left[1-\chi_{\lambda}(|\xi|)+(t|\xi|)^{l}\chi_{\lambda}(|\xi|)\right] |\xi|^{1-1/p-1/q}
|\wh{f}(\xi)|\right\|_{q}
\\
\asymp
\left\||\xi|^{1-1/p-1/q}[1-\chi_{\lambda}(|\xi|)]|\wh{f}(\xi)|\right\|_{q}+
\left\||\xi|^{1-1/p-1/q}(t|\xi|)^{l}\chi_{\lambda}(|\xi|)|\wh{f}(\xi)|\right\|_{q}
\\
=
\left\||\xi|^{1-1/p-1/q}
\left|\left[f-S_{\lambda}(f)\right]\!\wh{\;\;}(\xi)\right|\right\|_{q}+
t^{l}\left\||\xi|^{1-1/p-1/q}
\left|\left[D_{l}S_{\lambda}(f)\right]\!\wh{\;\;}(\xi)\right|\right\|_{q},
\end{multline}
which is an analogue of 
 \eqref{Iq}. Then as in the case of $d\ge 2$
we continue by using Pitt's inequality \eqref{PittIneq} and its corollary
\eqref{PittIneq2} with $\beta=0$ and $d=1$. This concludes the proof of the
case $d=1$.
\end{proof}

\section{Growth of Fourier transforms via moduli of smoothness. The case of
radial functions} \label{sec-rc}

Theorem \ref{t1} was proved under the condition $1<p\le q\le p'<\infty$ (A) and
$1<\max\left\{q,q'\right\}\le p<\infty$ (B). When $d\ge 2$ these conditions can
be extended if we restrict ourselves to radial functions
\[
f(x)=f_{0}(|x|).
\]

The Fourier transform of a radial function is also radial $\wh{f}(\xi)=F_{0}(|\xi|)$
(see \cite[Ch.~4]{SW}) and it can be written as the Fourier--Hankel transform
\[
F_{0}(s)=|S^{d-1}|\int_{0}^{\infty}f_{0}(t)j_{d/2-1}(st)t^{d-1}\,dt,
\]
where $j_{\alpha}(t)=\Gamma(\alpha+1)(t/2)^{-\alpha}J_{\alpha}(t)$ is the
normalized Bessel function ($j_{\alpha}(0)=1$), $\alpha\ge -1/2$. Useful
properties of $J_{\alpha}$ can be found in, e.g., \cite[Ch.~9]{AS}; see also
\cite{GLT} for some properties of $j_{\alpha}$.

\begin{theorem}\label{t2} Let $f\in L^p(\R^d)$ be a radial function and
$d\ge 2$.

\smallbreak
\noindent
\textup{(A)} Let $1<p\le q<\infty$. Then, for $p\le \frac{2d}{d+1}$, $q<\infty$
or $\frac{2d}{d+1}<p\le 2$, $p\le q\le
\left(\frac{d+1}{2}-\frac{d}{p}\right)^{-1}$,
\[
\left(
\int_{\R^d}\left[\min(1, t|\xi|)^{2l} |\xi|^{d(1-1/p-1/q)} |\wh{f}(\xi)|\right]^{q}\,d\xi
\right)^{1/q}\ll
\|f - V_{l,t} f\|_p.
\]

\smallbreak
\noindent
\textup{(B)} Let $2\le p<\infty$, $|\xi|^{d(1-1/p-1/q)}\wh{f}(\xi)\in
L^{q}(\R^{d})$, $q>1$ and
$\max\left\{q,d\left(\frac{d+1}{2}-\frac{1}{q}\right)^{-1}\right\}\le p$. Then
\[
\left(
\int_{\R^d}\left[\min(1, t|\xi|)^{2l} |\xi|^{d(1-1/p-1/q)} |\wh{f}(\xi)|\right]^{q}\,d\xi
\right)^{1/q}\gg
\|f - V_{l,t} f\|_p.
\]
\end{theorem}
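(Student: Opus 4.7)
The plan is to follow the proof of Theorem~\ref{t1} in the case $d \ge 2$ essentially verbatim, replacing the general Pitt inequalities \eqref{PittIneq} and \eqref{PittIneq2} with their \emph{radial} counterparts, whose admissible $(p,q)$-ranges are strictly wider.

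First I would verify that radiality is preserved along the realization chain of Theorem~\ref{t1}. Since the kernel $G_{\lambda,l,b}$ is radial (its Fourier transform $\eta_{l,b}(|\xi|)$ depends only on $|\xi|$), both $R_{\lambda,l,b}(f) = G_{\lambda,l,b} * f$ and $\Delta^l R_{\lambda,l,b}(f)$ are radial whenever $f$ is. Consequently the realization \eqref{v}--\eqref{v1-new}, the splittings \eqref{etamin}--\eqref{h1h2eq}, and the resulting identity \eqref{Iq} all transfer to the present setting without change. We are therefore reduced to bounding the two summands on the right of \eqref{Iq} by $\|f - V_{l,t}f\|_p$ from above (for part (A)) and from below (for part (B)) under the enlarged range of parameters.

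Next I would invoke the radial analogue of Pitt's inequality. Using the Hankel representation
\[
F_0(s) = |S^{d-1}|\int_0^\infty g_0(r)\,j_{d/2-1}(sr)\,r^{d-1}\,dr
\]
recalled at the beginning of Section~\ref{sec-rc}, the sharp radial inequality (available in \cite{GLT}) asserts that for radial $g(x) = g_0(|x|)$ with $\beta = 0$, $\gamma = d(1/p+1/q-1)$,
\[
\Bigl(\int_{\R^d}\bigl(|\xi|^{-\gamma}|\wh{g}(\xi)|\bigr)^q\,d\xi\Bigr)^{1/q} \ll \|g\|_p
\]
on the region where either $1 < p \le \tfrac{2d}{d+1}$ and $p \le q < \infty$, or $\tfrac{2d}{d+1} < p \le 2$ and $p \le q \le (\tfrac{d+1}{2} - \tfrac{d}{p})^{-1}$. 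This is exactly the hypothesis of part (A); specializing to $g = f - R_{\lambda,l,b}(f)$ and $g = \Delta^l R_{\lambda,l,b}(f)$ and recombining via \eqref{v}--\eqref{v1-new} yields (A). Part (B) follows from the dual radial inequality, produced by the substitution $\wh{g} \leftrightarrow f$, $p \leftrightarrow q$, $\beta \leftrightarrow -\gamma$ used in the proof of Theorem~\ref{t1}; with $\beta = 0$ the dual admissible region becomes precisely $\max\{q, d(\tfrac{d+1}{2} - 1/q)^{-1}\} \le p$, matching the hypothesis of (B).

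The main obstacle is pinning down the sharp admissible range of the radial Pitt inequality: the threshold $\tfrac{2d}{d+1}$ is the Stein--Tomas exponent and ultimately reflects the decay $j_{\alpha}(t) = O(t^{-\alpha-1/2})$ of the normalized Bessel function, so one must either invoke the result directly from \cite{GLT} or rederive it via standard Hankel-transform interpolation between the $L^1 \to L^\infty$ endpoint obtained from this Bessel decay and the Plancherel identity at $p = q = 2$. Once this radial inequality is in hand, the remainder of the argument is a line-by-line repetition of the $d \ge 2$ proof of Theorem~\ref{t1}.
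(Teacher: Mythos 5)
Your proposal is correct and follows essentially the same route as the paper: note that $f-R_{\lambda,l,b}(f)$ and $\Delta^{l}R_{\lambda,l,b}(f)$ remain radial, reuse the realization identity \eqref{Iq}, and replace \eqref{PittIneq}--\eqref{PittIneq2} by the radial Pitt inequality and its dual, whose conditions with $\beta=0$, $\gamma=d(1/p+1/q-1)$ translate exactly into the stated ranges. The only small point is attribution: the sharp radial range \eqref{radcond} is taken from De~Carli \cite{Ca} (Pitt's inequality for the Hankel transform), not from \cite{GLT}, and the paper simply invokes it rather than rederiving it by interpolation.
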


\begin{remark*}
1. Formally, when $d=1$ conditions in Theorems \ref{t2} and \ref{t1} coincide.
However, note that no regularity condition was assumed in Theorem \ref{t1}.

2. The range of conditions on $p$ and $q$ in Theorem \ref{t2} is wider than the
corresponding range in Theorem \ref{t1} for $d\ge 2$.

Indeed, in Theorem \ref{t1}~(A) we assume the following conditions: $1<p\le 2$
and $p\le q\le p'$. If $p\le \frac{2d}{d+1}$, in Theorem \ref{t2}~(A)
conditions are $p\le q<\infty$. If $\frac{2d}{d+1}<p\le 2$, then
$\left(\frac{d+1}{2}-\frac{d}{p}\right)^{-1}\ge p'$. Thus, the conditions $p\le
q\le \left(\frac{d+1}{2}-\frac{d}{p}\right)^{-1}$ are less restrictive than
$p\le q\le p'$.

In its turn, in Theorem \ref{t1}~(B) we assume that $2\le p<\infty$ and
$\max\left\{q,q'\right\}\le p$. If $q<2$, then $p\ge q'$ and
$\max\left\{q,d\left(\frac{d+1}{2}-\frac{1}{q}\right)^{-1}\right\}=d\left(\frac{d+1}{2}-\frac{1}{q}\right)^{-1}<q'$.
If $2\le q$, then
$\max\left\{q,d\left(\frac{d+1}{2}-\frac{1}{q}\right)^{-1}\right\}=q$. Hence,
we get $\max\left\{q,d\left(\frac{d+1}{2}-\frac{1}{q}\right)^{-1}\right\}\le
\max\left\{q,q'\right\}$.
\end{remark*}

\begin{proof}[Proof of Theorem \ref{t2}] The proof is similar to the proof of
Theorem \ref{t1} but we use Pitt's inequality for radial functions. We also
remark that for a radial function $f$, functions $f-R_{\lambda,l,b}(f)$ and
$\Delta^{l}R_{\lambda,l,b}(f)$ are radial as well.

De~Carli \cite{Ca} proved Pitt's inequality for the Hankel transform. In
particular, this gives inequality \eqref{PittIneq} for radial functions. As it
was shown in \cite{Ca}, in this case the condition on $\gamma$ is as follows
\begin{equation}\label{radcond}
\frac{d}{q}-\frac{d+1}{2}+\max\left\{\frac{1}{p},\frac{1}{q'}\right\}\le
\gamma<\frac{d}{q},\quad 1<p\le q<\infty.
\end{equation}
Therefore, \eqref{PittIneq2} for radial functions holds under the condition
\begin{equation}\label{radcond2}
\frac{d}{p}-\frac{d+1}{2}+\max\left\{\frac{1}{q},\frac{1}{p'}\right\}\le
-\beta<\frac{d}{p},\quad 1<q\le p<\infty.
\end{equation}
We will use \eqref{radcond} and \eqref{radcond2} with $\beta=0$ and $\gamma=d\left(\frac{1}{p}+\frac{1}{q}-1\right)$.

To show (A), we assume \eqref{radcond}, that is, the following two conditions hold simultaneously
\[
\frac{d-1}{2}+\frac{1}{p}\le \frac{d}{p},\qquad
\frac{d-1}{2}+\frac{1}{q'}\le \frac{d}{p}.
\]
If $d\ge 2$, the first condition is equivalent to $p\le 2$. If $p\le
\frac{2d}{d+1}$, then the second condition is $q<\infty$. If
$\frac{2d}{d+1}<p\le 2$, then respectively $q\le \left(\frac{d+1}{2}-\frac{d}{p}\right)^{-1}$.

Let us verify all conditions in (B). We assume \eqref{radcond2}, or,
equivalently,
\[
\frac{d}{p}-\frac{d+1}{2}+\frac{1}{q}\le 0,\qquad
\frac{d}{p}-\frac{d+1}{2}+\frac{1}{p'}\le 0.
\]
If $d\ge 2$, the second inequality is equivalent to the condition $p\ge 2$. The
first inequality can be rewritten as $p\ge
d\left(\frac{d+1}{2}-\frac{1}{q}\right)^{-1}$. Since also $p\ge q$, we finally
arrive at condition
$\max\left\{q,d\left(\frac{d+1}{2}-\frac{1}{q}\right)^{-1}\right\}\le p$, under
which needed Pitt's inequality holds.
\end{proof}

\section{Growth of Fourier transforms via moduli of smoothness.\\ The case of general monotone functions}
\label{sec-gm}

 The following equivalence holds for
$p=2$ (see \cite{BP}, \cite{Di}, \cite{G} and Theorem \ref{t1} (A, B)):
\begin{equation}\label{eqL2}
\left(\int_{\R^d}\left[\min(1,t|\xi|)^{\theta l}|\wh{f}(\xi)|\right]^{p}\,d\xi
\right)^{1/p}\asymp \Omega_{l}(f,t)_{p},
\end{equation}
where $\Omega_{l}(f,t)_{p}$ and $\theta$ are given by \eqref{Ot-d2} and \eqref{Ot-d1}.

In this section we show that similar two sided inequalities also hold for
$\frac{2d}{d+1}<p<\infty$ provided $\wh{f}$ is radial, nonnegative and regular
in a certain sense.

\subsection{General monotone functions and the $\wh{GM}{}^{d}$ class}
A function $\varphi(z)$, $z>0$, is called \textit{general monotone}
($\varphi\in GM$), if it is locally of bounded variation on $(0,\infty)$,
vanishes at infinity, and for some constant $c>1$ depending on $\varphi$, the
following is true
\begin{equation}\label{GMcond-usi}
\int_{z}^{\infty}|d\varphi(u)|\ll
\int_{z/c}^{\infty}\frac{|\varphi(u)|}{u}\,du<\infty,\quad z>0
\end{equation}
(see \cite{GLT}). Any monotone function vanishing at infinity
satisfies GM-condition. Note also that \eqref{GMcond-usi} implies
\begin{equation}\label{estv}
|\varphi(z)|\ll \int_{z/c}^{\infty}\frac{|\varphi(u)|}{u}\,du.
\end{equation}
In particular, the latter gives, for any $b>1$,
\begin{equation}\label{GLT3-1}
|\varphi(z)|\ll \int_{z/(bc)}^{\infty}u^{-1}\left(\int_{u/b}^{bu}\frac{|\varphi(v)|}{v}\,dv\right)du.
\end{equation}

We will also use the following result on multipliers of general monotone functions.
\begin{lemma}\label{lemGMcond}
Let $\varphi\in GM$ and a function $\alpha(z)$ be locally of bounded variation
on $(0,\infty)$ such that $\lim_{z\to 0}\alpha(z)=0$ and
\[
\int_{0}^{cu}|d\alpha(v)|\ll |\alpha(u)|,\quad u>0.
\]
Then $\varphi_{1}=\alpha\varphi\in GM$.
\end{lemma}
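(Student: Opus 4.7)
The plan is to verify the three defining properties of $GM$ for $\varphi_1=\alpha\varphi$: local bounded variation (immediate as a product of locally BV functions), vanishing at infinity, and the integral inequality \eqref{GMcond-usi}. The main work is bounding $\int_z^\infty|d\varphi_1|$ via the Leibniz-type estimate $|d(\alpha\varphi)|\le|\alpha|\,|d\varphi|+|\varphi|\,|d\alpha|$, which reduces matters to controlling $T_1:=\int_z^\infty|\alpha|\,|d\varphi|$ and $T_2:=\int_z^\infty|\varphi|\,|d\alpha|$, each by $\int_{z/c}^\infty|\alpha\varphi|/v\,dv$.

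The first step is to extract a quasi-monotonicity property of $|\alpha|$ from the hypothesis. Since $\alpha(0+)=0$, for every $v\le cu$ one has
$$|\alpha(v)|\le\int_0^v|d\alpha|\le\int_0^{cu}|d\alpha|\ll|\alpha(u)|,$$
so $|\alpha|$ is essentially non-decreasing; in particular $|\alpha(z)|\ll|\alpha(v)|$ whenever $v\ge z/c$, and $|\alpha(u)|\asymp\int_0^{cu}|d\alpha(v)|$.

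For $T_2$, apply \eqref{estv} to $\varphi$ and swap the order of integration:
$$T_2\ll\int_z^\infty|d\alpha(u)|\int_{u/c}^\infty\frac{|\varphi(v)|}{v}\,dv=\int_{z/c}^\infty\frac{|\varphi(v)|}{v}\int_z^{cv}|d\alpha(u)|\,dv\ll\int_{z/c}^\infty\frac{|\alpha\varphi|}{v}\,dv,$$
using the hypothesis on $\alpha$ in the last step. For $T_1$, substitute $|\alpha(u)|\asymp\int_0^{cu}|d\alpha(v)|$, swap the order of integration, and split at $v=cz$. The tail $v\ge cz$ yields $\ll\int_{z/c}^\infty|\alpha\varphi|/v\,dv$ by a swap identical to the one used for $T_2$. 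The part $v\le cz$ collapses, after the $GM$ property of $\varphi$ is applied to $\int_z^\infty|d\varphi|$, to the boundary term $|\alpha(z)|\int_{z/c}^\infty|\varphi|/v\,dv$, which the quasi-monotonicity from the first step then absorbs into $\int_{z/c}^\infty|\alpha\varphi|/v\,dv$.

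Combining the estimates gives $\int_z^\infty|d\varphi_1|\ll\int_{z/c}^\infty|\varphi_1|/v\,dv$, which is the $GM$ inequality. Finiteness of the right-hand side then forces $\varphi_1$ to vanish at infinity: being of bounded variation on $[z,\infty)$, $\varphi_1$ admits a limit at $\infty$, and any nonzero limit would make $\int^\infty|\varphi_1|/v\,dv$ diverge. The main obstacle is precisely the boundary term in $T_1$; without the quasi-monotonicity of $|\alpha|$ (a direct but not explicitly stated consequence of the hypothesis), one cannot absorb $|\alpha(z)|$ into the integral and the decomposition fails.
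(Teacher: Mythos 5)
Your proposal is correct and follows essentially the paper's own proof: the same splitting of $\int_z^\infty|d(\alpha\varphi)|$ into $\int_z^\infty|\varphi|\,|d\alpha|$ and $\int_z^\infty|\alpha|\,|d\varphi|$, the same use of \eqref{estv} and of the GM condition for $\varphi$ followed by Fubini swaps, and the same absorption of the boundary term $|\alpha(z)|$ via $\alpha(0+)=0$ and the hypothesis on $\alpha$ (where the paper writes $|\alpha(u)|\ll|\alpha(z)|+\int_z^u|d\alpha(v)|$ you substitute $|\alpha(u)|\asymp\int_0^{cu}|d\alpha(v)|$ and split at $v=cz$, a purely cosmetic difference; the enlarged constant $c^2$ appearing in your tail estimate is harmless since the GM class allows any fixed constant). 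Your closing remark on vanishing at infinity presupposes finiteness of $\int_{z/c}^\infty|\alpha\varphi|/u\,du$, which neither you nor the paper establishes (it is automatic when $\alpha$ is bounded, as in all applications here), so on this point you are no less complete than the original argument.
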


\begin{proof}
By definition of GM, it is sufficient to verify
\begin{equation}\label{Iv1}
I:=\int_{z}^{\infty}|d\varphi_{1}(u)|\ll
\int_{z/c}^{\infty}\frac{|\varphi_{1}(u)|}{u}\,du,\quad z>0.
\end{equation}
First,
\[
I\ll
\int_{z}^{\infty}|\varphi(u)|\,|d\alpha(u)|+
\int_{z}^{\infty}|\alpha(u)|\,|d\varphi(u)|=:I_{1}+I_{2},
\]
and, by \eqref{estv}, we get
\[
I_{1}=\int_{z}^{\infty}|\varphi(u)|\,|d\alpha(u)|\ll
\int_{z}^{\infty}\left(\int_{u/c}^{\infty}\frac{|\varphi(v)|}{v}\,dv\right)\,|d\alpha(u)|=
\int_{z/c}^{\infty}\left(\int_{z}^{cv}|d\alpha(u)|\right)\frac{|\varphi(v)|}{v}\,dv.
\]
To estimate $I_{2}$, using
\[
|\alpha(u)|=\left|\alpha(z)+\int_{z}^{u}\,d\alpha(v)\right|\ll |\alpha(z)|+
\int_{z}^{u}|d\alpha(v)|,\quad u>z,
\]
and condition \eqref{GMcond-usi}, we have
\begin{multline*}
I_{2}\ll
|\alpha(z)|\int_{z}^{\infty}|d\varphi(u)|+\int_{z}^{\infty}\left(\int_{z}^{u}|d\alpha(v)|\right)|d\varphi(u)|
\\
\ll |\alpha(z)|\int_{z/c}^{\infty}\frac{|\varphi(v)|}{v}\,dv+
\int_{z}^{\infty}\left(\int_{v}^{\infty}|d\varphi(u)|\right)|d\alpha(v)|
\\
\ll |\alpha(z)|\int_{z/c}^{\infty}\frac{|\varphi(v)|}{v}\,dv+
\int_{z}^{\infty}\left(\int_{v/c}^{\infty}\frac{|\varphi(u)|}{u}\,du\right)|d\alpha(v)|
\\
=|\alpha(z)|\int_{z/c}^{\infty}\frac{|\varphi(v)|}{v}\,dv+
\int_{z/c}^{\infty}\left(\int_{z}^{cu}|d\alpha(v)|\right)\frac{|\varphi(u)|}{u}\,du.
\end{multline*}
Therefore, since
\[
|\alpha(z)|=\left|\int_{0}^{z}\,d\alpha(v)\right|\le \int_{0}^{z}|d\alpha(v)|,
\]
we arrive at
\[
I\ll I_{1}+I_{2}\ll
\int_{z/c}^{\infty}\left(|\alpha(z)|+\int_{z}^{cu}|d\alpha(v)|\right)\frac{|\varphi(v)|}{v}\,dv\le
\int_{z/c}^{\infty}\left(\int_{0}^{cu}|d\alpha(v)|\right)\frac{|\varphi(v)|}{v}\,dv.
\]
Finally, the integral condition on $\alpha$ concludes the proof of \eqref{Iv1}.
\end{proof}

Let $\wh{GM}{}^{d}$, $d\ge 1$, be the collection of all radial functions
$f(x)=f_{0}(|x|)$, $x\in \R^{d}$, which are defined in terms of the inverse Fourier--Hankel transform
\begin{equation}\label{deff0}
f_{0}(z)=\frac{|S^{d-1}|}{(2\pi)^{d}}\int_{0}^{\infty}F_{0}(s)j_{d/2-1}(zs)s^{d-1}\,ds,
\end{equation}
where the function $F_{0}\in GM$ and satisfies the following condition
\begin{equation}\label{GMcond}
\int_{0}^{1}s^{d-1}|F_{0}(s)|\,ds+
\int_{1}^{\infty}s^{(d-1)/2}\,|dF_{0}(s)|<\infty.
\end{equation}
Applying Lemma 1 from the paper \cite{GLT} to $F_{0}$, we obtain that the integral in \eqref{deff0}
converges in the improper sense
and therefore $f_{0}(z)$ is continuous for $z>0$.
In addition, $F_{0}$ is a radial component of the Fourier transform of the function
$f$, that is, $\wh{f}(\xi)=F_{0}(|\xi|)$, $\xi\in \R^{d}$.

Let us give some examples of functions from the class $\wh{GM}{}^{d}$.

\begin{example}
Let $f\in C(\R^{d})\cap L^{p}(\R^{d})$, where $1\le p<2d/(d+1)$ for $d\ge 2$
and $p=1$ for $d=1$, be a radial positive-definite function such that $F_{0}\in
GM$. Then $f\in \wh{GM}{}^{d}$. Indeed, $\wh{f}$ is continuous function
vanishing at infinity and $\wh{f}\ge 0$ \cite[Ch.~1]{SW}. From continuity of
$f$ at zero we get $\wh{f}\in L^{1}(\R^{d})$ \cite[Cor.~1.26]{SW}, i.e.,
$\int_{0}^{\infty}s^{d-1}|F_{0}(s)|\,ds<\infty$. Since any GM-function $F_{0}$
satisfies (\cite[p. 111]{GLT})
\[
\int_{1}^{\infty}s^{\sigma}\,|dF_{0}(s)|\ll
\int_{1/c}^{\infty}s^{\sigma-1}|F_{0}(s)|\,ds,\quad \sigma\ge 0,
\]
then, using $(d-1)/2-1<d-1$, we get
\[
\int_{0}^{1}s^{d-1}|F_{0}(s)|\,ds+
\int_{1}^{\infty}s^{(d-1)/2}\,|dF_{0}(s)|\ll
\int_{0}^{\infty}s^{d-1}|F_{0}(s)|\,ds<\infty.
\]
Therefore, condition \eqref{GMcond} holds, that is, $f\in \wh{GM}{}^{d}$. As an
example of such function we can take $f(x)=(1+|x|^2)^{-(d+1)/2}$ and the
corresponding $F_{0}(s)= c_{d} e^{-s}$.
\end{example}

\begin{example}\label{exa2-GM}
Take $f(x)=j_{d/2}(|x|)$ (for $d=1$, $f(x)=\frac{\sin x}{x}$). Then
$F_{0}(s)=c\chi_{1}(s)\in GM$ and condition\eqref{GMcond} holds, i.e., $f\in \wh{GM}{}^{d}$. Moreover, we have (see, e.g., \cite{GLT})
\[
j_{d/2}(z)\asymp 1,\quad 0\le z\le 1,\qquad
|j_{d/2}(z)|\ll z^{-(d+1)/2},\quad z\ge 1,
\]
and
\[
|j_{d/2}(z)|\gg z^{-(d+1)/2},\quad z\in \bigcup_{k=1}^{\infty}
\left[\rho_{d/2,k}+\varepsilon,\rho_{d/2,k+1}-\varepsilon\right],
\]
where $\rho_{\alpha,k}$~positive zeros of the Bessel function $J_{\alpha}$,
$\inf_{k\ge 1}\left(\rho_{d/2,k+1}-\rho_{d/2,k}\right)\ge 3\varepsilon>0$.
This implies $f\in L^{p}(\R^{d})$ if $p>\frac{2d}{d+1}$.
\end{example}

\begin{example}\label{exa3-GM}
Let $F_{0}(s)\in GM$ and $|\xi|^{d(1-1/p-1/q)}F_{0}(|\xi|)\in L^{q}(\R^{d})$,
$1<q\le p<\infty$, $\frac{2d}{d+1}<p$. Then, using statement~(A.1) below,
condition \eqref{GMcond} for $F_0$ holds, $f$ is defined by (\ref{deff0}), and
$f\in \wh{GM}{}^{d}\cap L^{p}(\R^{d})$. The fact that $f\in L^{p}(\R^{d})$
follows from Pitt's inequality \eqref{GLT1} (take $\beta=0$).
\end{example}

\subsection{Two-sided inequalities}
\begin{theorem}\label{t3}
Let $f\in \wh{GM}{}^{d}\cap L^{p}(\R^{d})$, $d\ge 1$.

\smallbreak
\noindent
\textup{(A)} If $\wh{f}\ge 0$ and $1<p\le q<\infty$, then
\[
\left(\int_{\R^d}\left[\min(1,t|\xi|)^{\theta l}|\xi|^{d(1-1/p-1/q)}|\wh{f}(\xi)|\right]^{q}\,d\xi
\right)^{1/q}\ll
\Omega_l(f,t)_p.
\]

\smallbreak
\noindent
\textup{(B)} If $|\xi|^{d(1-1/p-1/q)}\wh{f}(\xi)\in L^{q}(\R^{d})$, $1<q\le
p<\infty$, $\frac{2d}{d+1}<p$, then
\[
\left(\int_{\R^d}\left[\min(1,t|\xi|)^{\theta l}|\xi|^{d(1-1/p-1/q)}|\wh{f}(\xi)|\right]^{q}\,d\xi
\right)^{1/q}\gg
\Omega_l(f,t)_p.
\]
\end{theorem}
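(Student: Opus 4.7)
The plan is to follow the blueprint of the proof of Theorem \ref{t1}, but to replace the classical Pitt inequalities \eqref{PittIneq}--\eqref{PittIneq2} (and their radial refinements \eqref{radcond}--\eqref{radcond2}) by the Pitt-type inequality for radial $GM$ functions from \cite{GLT}, whose admissible parameter range is substantially wider. As in Theorem \ref{t1}, I would decompose $\min(1,t|\xi|)^{\theta l}$ using \eqref{etamin} for $d\ge 2$ and \eqref{thm1-min} for $d=1$, invoke the realizations \eqref{v}--\eqref{v1-new} or \eqref{omega2}, and thereby reduce both parts of the theorem to weighted Fourier norm estimates on radial $GM$ functions.

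For part (A), set
\[
I:=\left\|\min(1,t|\xi|)^{\theta l}|\xi|^{d(1-1/p-1/q)}|\wh{f}(\xi)|\right\|_{L^{q}(\R^{d})}.
\]
Repeating the manipulations \eqref{I}--\eqref{Iq} (respectively \eqref{IS} for $d=1$), and using $\wh{f}\ge 0$ together with \eqref{h1h2eq},
\[
I \asymp \bigl\||\xi|^{d(1-1/p-1/q)}\bigl|\left[f-R_{\lambda,l,b}(f)\right]\!\wh{\;\;}(\xi)\bigr|\bigr\|_{q} + t^{2l}\bigl\||\xi|^{d(1-1/p-1/q)}\bigl|\left[\Delta^{l}R_{\lambda,l,b}(f)\right]\!\wh{\;\;}(\xi)\bigr|\bigr\|_{q}.
\]
By \eqref{hatR}, the two radial components appearing on the right are $[1-\eta_{l,b}(ts)]F_{0}(s)$ and $s^{2l}\eta_{l,b}(ts)F_{0}(s)$. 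I would then apply Lemma \ref{lemGMcond} with the multipliers $\alpha_{1}(s)=1-\eta_{l,b}(ts)$ and $\alpha_{2}(s)=(ts)^{2l}\eta_{l,b}(ts)$: both vanish at the origin, are locally of bounded variation, and a direct estimate shows that their total variations on $(0,cu)$ are controlled by $|\alpha_{i}(u)|$, so both products remain in $GM$. The GM Pitt inequality for radial functions then applies in the full range $1<p\le q<\infty$ with $\beta=0$ and $\gamma=d\bigl(\tfrac{1}{p}+\tfrac{1}{q}-1\bigr)$, giving the desired upper bound after combining with \eqref{v}.

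For part (B), I would argue symmetrically using the reverse Pitt inequality for radial $GM$ functions which, under the additional assumption $p>\frac{2d}{d+1}$, is valid throughout $1<q\le p<\infty$ with the same $\beta,\gamma$. Applied to each of the two pieces produced by the splitting, this yields
\[
I \gg \|f-R_{\lambda,l,b}(f)\|_{p} + t^{2l}\|\Delta^{l}R_{\lambda,l,b}(f)\|_{p} \asymp \Omega_{l}(f,t)_{p}.
\]

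The main obstacle is to invoke the $GM$ version of Pitt's inequality with the extended ranges, and to verify that Lemma \ref{lemGMcond} is indeed applicable to the specific multipliers $1-\eta_{l,b}(ts)$ and $(ts)^{2l}\eta_{l,b}(ts)$, together with their $d=1$ analogues built from $\chi_{\lambda}$ via \eqref{thm1-min}. The cutoff $p>\frac{2d}{d+1}$ in part (B) matches the sharpness threshold illustrated by $j_{d/2}(|x|)$ in Example \ref{exa2-GM}, below which a radial $GM$ Fourier datum need not yield an $L^{p}$ function; this is precisely the range forbidden in Theorem \ref{t1}(B) but recovered here by exploiting the $GM$ regularity of $\wh{f}$.
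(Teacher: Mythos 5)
Your overall strategy (reuse the realization and the $\eta_{l,b}$/$\chi_{\lambda}$ decompositions from Theorem \ref{t1}, and replace classical Pitt by the weighted inequalities of \cite{GLT} for radial $GM$ data) is the right one for part (B) and matches the paper's, but your key verification step is false, and this is exactly the point where the paper has to work differently. Lemma \ref{lemGMcond} does \emph{not} apply to the multiplier $\alpha_{2}(s)=(ts)^{2l}\eta_{l,b}(ts)$ (nor to its $d=1$ analogue $(ts)^{l}\chi_{\lambda}(s)$): these functions vanish identically for $s\ge 1/t$, so for $u\ge 1/t$ the left-hand side $\int_{0}^{cu}|d\alpha_{2}(v)|$ is a fixed positive constant while $|\alpha_{2}(u)|=0$, and the hypothesis $\int_{0}^{cu}|d\alpha_{2}(v)|\ll|\alpha_{2}(u)|$ fails. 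Hence the second piece $s^{\theta l}\eta_{l,b}(ts)F_{0}(s)$ is not shown to be $GM$, and you cannot feed it into the $GM$ Pitt inequalities \eqref{GLT1} or \eqref{GLT}; only the full multiplier $\min(1,ts)^{\theta l}$, being monotone, passes Lemma \ref{lemGMcond} (this is \eqref{minF}). The paper's proof of (B) is built around this obstruction: it applies \eqref{GLT1} to $I_{1}$ only, and treats the derivative term by writing $(-1)^{l}t^{2l}\Delta^{l}R_{\lambda,l,b}(f)=G_{\lambda,l,b}*h$ with $\wh{h}(\xi)=\min(1,t|\xi|)^{2l}\wh{f}(\xi)\in GM$, using Young's inequality together with \eqref{nG1} and then \eqref{GLT1} applied to the \emph{whole} quantity $I$, so that $I\gg\|h\|_{p}\gg t^{2l}\|\Delta^{l}R_{\lambda,l,b}(f)\|_{p}$. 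Without this (or some substitute proof that $s^{\theta l}\eta_{l,b}(ts)F_{0}(s)\in GM$, which you do not have), your estimate of the second piece in (B) is unsupported.

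For part (A) the gap is larger, because the inequality you would need in the direction ``Fourier side $\ll$ space side'' with hypotheses on $\wh{f}$ is \eqref{GLT}, which requires $\wh f\ge0$ and, with $\beta=0$, forces $p>\frac{2d}{d+1}$; there is no ``$GM$ Pitt inequality valid in the full range $1<p\le q<\infty$'' to invoke, so your argument does not cover $p\le\frac{2d}{d+1}$ (the paper handles that sub-range by falling back on Theorem \ref{t2}). Moreover, applying \eqref{GLT} piecewise again runs into the non-$GM$ second piece. The paper instead abandons the $\eta$-decomposition for $d\ge2$: it uses $[f-V_{l,t}f]\,\wh{}\,(\xi)=[1-m_{l}(t|\xi|)]\wh{f}(\xi)$ with $1-m_{l}(s)\asymp\min(1,s)^{2l}$ (\cite[Lemma 3.4]{DD}), and then runs the positivity-based chain \eqref{GLT3-1}, \eqref{GLT3} and \eqref{GLT4} to get $I\ll\|f-V_{l,t}f\|_{p}$ directly, with no multiplier theorem and no splitting; in $d=1$ it can keep the combined function $h=f-S_{\lambda}(f)+t^{l}D_{l}S_{\lambda}(f)$, because \eqref{thm1-min} is an exact identity there, and apply \eqref{GLT} to $\wh h=\min(1,t|\xi|)^{l}\wh f\in GM$. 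If you want to salvage your piecewise scheme for (A), you would have to either prove $GM$-ness of $s^{\theta l}\eta_{l,b}(ts)F_{0}(s)$ or justify passing from $\|h\|_{p}$ (with $\wh h=\min(1,t|\xi|)^{\theta l}\wh f$) to $\Omega_{l}(f,t)_{p}$ by a genuine multiplier argument; neither is in your proposal.
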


\begin{remark*}
Conditions on $p$ and $q$ in Theorem \ref{t3} (A, B) are less restrictive than
corresponding conditions in Theorem \ref{t2}. It is clear for (A). Since
$\frac{2d}{d+1}\le d\left(\frac{d+1}{2}-\frac{1}{q}\right)^{-1}$, conditions
$q\le p$ and $\frac{2d}{d+1}<p$ in Theorem \ref{t3} (B) are weaker than
$\max\left\{2,q,d\left(\frac{d+1}{2}-\frac{1}{q}\right)^{-1}\right\}\le p$,
which is the corresponding condition in Theorem \ref{t2} (B).
\end{remark*}

In case of $p=q$ Theorem \ref{t3} gives the following equivalence result.

\begin{corollary}\label{eqiv}
If $f\in \wh{GM}{}^{d}\cap L^{p}(\R^{d})$,
$d\ge 1$, $\wh{f}\ge 0$, $\frac{2d}{d+1}<p<\infty$, then
\[
\left(\int_{\R^d}\left[\min(1,t|\xi|)^{\theta
l}|\xi|^{d(1-2/p)}|\wh{f}(\xi)|\right]^{p}\,d\xi\right)^{1/p}\asymp
\Omega_l(f,t)_p.
\]
\end{corollary}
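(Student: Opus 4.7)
The plan is to obtain the corollary as an essentially immediate consequence of Theorem~\ref{t3}, specialized to $q=p$; parts (A) and (B) will supply the two halves of the equivalence, and the only nontrivial bit will be verifying the integrability hypothesis in part~(B) from what (A) already yields.

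First I would invoke Theorem~\ref{t3}(A) with $q=p$: the hypothesis $1<p$ follows from $\frac{2d}{d+1}\ge 1$, and $\wh f\ge 0$ is given, so (A) directly produces
\[
\left(\int_{\R^d}\bigl[\min(1,t|\xi|)^{\theta l}|\xi|^{d(1-2/p)}|\wh f(\xi)|\bigr]^{p}\,d\xi\right)^{1/p}\ll\Omega_l(f,t)_p.
\]
The crucial auxiliary point I would use is that $\Omega_l(f,t)_p\ll\|f\|_p$ uniformly in $t>0$: for $d\ge 2$ the spherical-average operator $V_{l,t}$ is an $L^p$-contraction up to a constant, so $\|f-V_{l,t}f\|_p\ll\|f\|_p$, while for $d=1$ one has the trivial bound $\omega_l(f,t)_p\le 2^l\|f\|_p$.

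To verify $|\xi|^{d(1-2/p)}\wh f(\xi)\in L^p(\R^d)$, which is the remaining hypothesis of Theorem~\ref{t3}(B) with $q=p$, I would use Fatou's lemma: since $\min(1,t|\xi|)^{\theta l}\to 1$ as $t\to\infty$ for every $\xi\neq 0$,
\[
\int_{\R^d}|\xi|^{d(p-2)}|\wh f(\xi)|^{p}\,d\xi\le\liminf_{t\to\infty}\int_{\R^d}\bigl[\min(1,t|\xi|)^{\theta l}|\xi|^{d(1-2/p)}|\wh f(\xi)|\bigr]^{p}\,d\xi\ll\|f\|_p^{p}<\infty.
\]
With $1<p$ and $\frac{2d}{d+1}<p$ already granted by the corollary's assumptions, Theorem~\ref{t3}(B) with $q=p$ now applies and yields the matching lower estimate, which combined with the upper bound proves the claimed equivalence. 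The only mildly delicate point is the Fatou step that upgrades the conclusion of (A) into the integrability condition for (B), and this should present no real obstacle.
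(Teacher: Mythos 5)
Your proposal is correct and takes essentially the paper's route: the corollary is precisely Theorem~\ref{t3} specialized to $q=p$, with (A) giving the upper bound and (B) the lower one. The only point you handle explicitly that the paper leaves implicit is the hypothesis $|\xi|^{d(1-2/p)}\wh f(\xi)\in L^{p}(\R^{d})$ for part (B); your derivation via the uniform bound $\Omega_l(f,t)_p\ll\|f\|_p$ and Fatou's lemma is sound (in the paper this integrability also follows at once from \eqref{aseqhff}, valid for nonnegative GM functions satisfying \eqref{GMcond}).
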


\begin{example*} Take $f(x)=j_{d/2}(|x|)$ (see Example \ref{exa2-GM}). By Corollary \ref{eqiv}, for $0<t<1$ and
$\frac{2d}{d+1}<p<\infty$, we have
\[
\Omega_{l}(f,t)_{p}\asymp \left\|\min(1,t|\xi|)^{\theta
l}|\xi|^{d(1-2/p)}\chi_{1}(|\xi|)\right\|_{p}\asymp t^{\theta l}.
\]
\end{example*}

\subsection{Weighted Fourier inequalities}
To prove Theorem \ref{t3}, we will use several auxiliary results from the
paper~\cite{GLT}.

Let $d\ge 1$, $1<p,q<\infty$,
$\beta-\gamma=d\left(1-\frac{1}{p}-\frac{1}{q}\right)$, $g(x)=g_0(|x|)$, and
$\wh{g}(\xi)=G_{0}(|\xi|)$.

\smallbreak
\noindent
\textbf{(A.1)} If $g_{0}\in GM$, $p\le q$, and
\[
\frac{d}{q}-\frac{d+1}{2}<\gamma<\frac{d}{q},
\]
then the following Pitt's inequality holds \cite[Th.~2 (A)]{GLT}
\[
\left\||\xi|^{-\gamma}\wh{g}(\xi)\right\|_{q}\ll
\left\||x|^{\beta}g(x)\right\|_{p}.
\]
Then changing variables $g\leftrightarrow \wh{f}$, $p\leftrightarrow q$, and
$\beta\leftrightarrow -\gamma$, we get
\begin{equation}\label{GLT1}
\left\||x|^{\beta}f(x)\right\|_{p}\ll
\left\||\xi|^{-\gamma}\wh{f}(\xi)\right\|_{q},\quad
\frac{d}{p}-\frac{d+1}{2}<-\beta<\frac{d}{p},\quad q\le p.
\end{equation}
Here $\wh{f}(\xi)=F_{0}(|\xi|)$ and $F_{0}\in GM$.
Note \cite[Sect. 5.1]{GLT} that the condition $|\xi|^{-\gamma}\wh{f}(\xi)\in L^{q}(\R^{d})$ implies condition \eqref{GMcond}.

\smallbreak
\noindent \textbf{(A.2)} Let $g_{0}\in GM$, $g_{0}\ge 0$ and $g_{0}$ satisfy condition
\eqref{GMcond}. Then if $q\le p$ and
\[
\frac{d}{q}-\frac{d+1}{2}<\gamma,
\]
then \cite[Th. 2(B)]{GLT}
\[
\left\||\xi|^{-\gamma}\wh{g}(\xi)\right\|_{q}\gg
\left\||x|^{\beta}g(x)\right\|_{p}.
\]
Again, changing variables $g\leftrightarrow \wh{f}$, $p\leftrightarrow q$, and
$\beta\leftrightarrow -\gamma$, we arrive at
\begin{equation}\label{GLT}
\left\||x|^{\beta}f(x)\right\|_{p}\gg
\left\||\xi|^{-\gamma}\wh{f}(\xi)\right\|_{q},\quad
\frac{d}{p}-\frac{d+1}{2}<-\beta,\quad p\le q.
\end{equation}
Here $\wh{f}(\xi)=F_{0}(|\xi|)\ge 0$ and $F_{0}\in GM$.

From (A.1) and (A.2) (see also \cite[Th. 1]{GLT}), for a non-negative GM-function
$F_{0}$ satisfying condition \eqref{GMcond}, we have
\begin{equation}\label{aseqhff}
\left\||\xi|^{d(1-2/p)}\wh{f}(\xi)\right\|_{p}\asymp \left\|f(x)\right\|_{p},\quad
\frac{2d}{d+1}<p<\infty.
\end{equation}

\smallbreak
\noindent
\textbf{(A.3)} Let $g_{0}\ge 0$. For $z>0$ we get (see \cite[formula (53)]{GLT})
\begin{equation}\label{GLT3}
\int_{z/(bc)}^{\infty}u^{-1}\left(\int_{u/b}^{bu}\frac{g_{0}(v)}{v}\,dv\right)du\ll
\int_{0}^{2bc/z}u^{(d-1)/2-1}\left(\int_{0}^{u}v^{(d-1)/2}|G_{0}(v)|\,dv\right)du,
\end{equation}
where $1<b<\rho_{d/2,1}$.

\smallbreak
\noindent
\textbf{(A.4)} The following inequality was shown in \cite[pp. 115-116]{GLT}
\begin{multline*}
\left[\int_{0}^{\infty}u^{-\gamma p+dp/q-dp-1}\left(\int_{0}^{u}v^{(d-1)/2-1}
\left(\int_{0}^{v}z^{(d-1)/2}|G_{0}(z)|\,dz\right)dv\right)^{p}\,du
\right]^{1/p}
\\
\ll \left(\int_{\R^{d}}\left[|x|^{-\gamma}|\wh{g}(x)|\right]^{q}\,dx\right)^{1/q},\quad
\frac{d}{q}-\frac{d+1}{2}<\gamma,\quad
q\le p.
\end{multline*}
Noting $u^{-\gamma p+dp/q-dp-1}=u^{-p\beta-d-1}$ and changing variables $\wh{g}\leftrightarrow f$, $p\leftrightarrow q$, $\beta\leftrightarrow
-\gamma$, we obtain
\begin{multline}\label{GLT4}
\left[\int_{0}^{\infty}u^{q\gamma-d-1}\left(\int_{0}^{u}v^{(d-1)/2-1}
\left(\int_{0}^{v}z^{(d-1)/2}|f_{0}(z)|\,dz\right)dv\right)^{q}\,du
\right]^{1/q}
\\
\ll \left(\int_{\R^{d}}\left[|x|^{\beta}|f(x)|\right]^{p}\,dx\right)^{1/p},\quad
\frac{d}{p}-\frac{d+1}{2}<-\beta,\quad p\le q.
\end{multline}

\subsection{Proof of Theorem \ref{t3} in the case $d\ge 2$.}
Let $t>0$, $f\in \wh{GM}{}^{d}\cap L^{p}(\R^{d})$, $f(x)=f_{0}(|x|)$, and
$\wh{f}(\xi)=F_{0}(|\xi|)$. Note that $F_{0}\in GM$. We use notations from the
proof of Theorem \ref{t1}.

First, we prove (B). Let $|\xi|^{d(1-1/p-1/q)}\wh{f}(\xi)\in L^{q}(\R^{d})$. We
have
\begin{multline*}
I=
\left\|\min(1,t|\xi|)^{2l}|\xi|^{d(1-1/p-1/q)}|\wh{f}(\xi)|\right\|_{q}
\\
\asymp
\left\||\xi|^{d(1-1/p-1/q)}\left[1-\eta_{l,b}(t|\xi|)\right]|\wh{f}(\xi)|\right\|_{q}+
\left\||\xi|^{d(1-1/p-1/q)}(t|\xi|)^{2l}\eta_{l,b}(t|\xi|)|\wh{f}(\xi)|\right\|_{q}=:I_{1}+I_{2}.
\end{multline*}
Then inequalities
\begin{multline*}
\int_{0}^{cu}|d(1-\eta_{l,b}(tv))|\asymp
t^{2l}\int_{0}^{cu}v^{2l-1}\eta_{l,b-1}(tv)\,dv\le
t^{2l}\int_{0}^{\min(cu,1/t)}v^{2l-1}\,dv
\\
\asymp
\min(1,ctu)^{2l}\asymp 1-\eta_{l,b}(tu),\quad b>1,
\end{multline*}
and Lemma \ref{lemGMcond} imply that the function
$\left[1-\eta_{l,b}(ts)\right]F_{0}(s)=\left[1-\eta_{l,b}(t|\xi|)\right]\!\wh{f}(\xi)$
is a GM-function.
Using Pitt's inequality \eqref{GLT1} for
$\beta=0$ and $\gamma=d\left(\frac{1}{p}+\frac{1}{q}-1\right)$ yields
\begin{equation}\label{est1}
I_{1}=\left\||\xi|^{d(1-1/p-1/q)}\left[f-R_{\lambda,l,b}(f)\right]\!\wh{\;\;}(\xi)\right\|_{q}\gg
\left\|f-R_{\lambda,l,b}(f)\right\|_{p}
\end{equation}
for
\begin{equation}\label{p}
p>\frac{2d}{d+1},\quad q\le p.
\end{equation}

Since $\eta_{l,b}(s)=0$ when $s\ge 1$, then
$(ts)^{2l}\eta_{l,b}(ts)=\min(1,ts)^{2l}\eta_{l,b}(ts)$. This and
\eqref{hatR} give
\[
(-1)^{l}t^{2l}\left[\Delta^l
R_{\lambda,l,b}(f)\right]\!\wh{\;\;}(\xi)=\eta_{l,b}(ts)\min(1,ts)^{2l}F_{0}(s),\quad
s=|\xi|.
\]
Also, since $\eta_{l,b}(t|\xi|)=\wh{G_{l,\lambda,b}}(\xi)$, then
\[
(-1)^{l}t^{2l}\Delta^l R_{\lambda,l,b}(f)=G_{\lambda,l,b}*h,\quad
\wh{h}(\xi)=\min(1,t|\xi|)^{2l}F_{0}(|\xi|).
\]
Using Young's convolution inequality, we obtain
\[
\left\|t^{2l}\Delta^l R_{\lambda,l,b}(f)\right\|_{p}\le
\|G_{\lambda,l,b}\|_{1}\|h\|_{p}=
\|G_{l,b}\|_{1}\|h\|_{p}\ll \|h\|_{p}.
\]

We remark that
\begin{equation}\label{minF}
\min(1,ts)^{2l}F_{0}(s)\in GM.
\end{equation}
This follows from the estimate
\[
\int_{0}^{cu}|d\min(1,tv)^{2l}|\asymp
t^{2l}\int_{0}^{\min(cu,1/t)}v^{2l-1}\,dv\asymp
\min[(ctu)^{2l},1]\asymp \min(1,tu)^{2l},
\]
and Lemma \ref{lemGMcond}.

Using again Pitt's inequality \eqref{GLT1}, we have
\begin{equation}\label{est2}
I=\left\||\xi|^{d(1-1/p-1/q)}\wh{h}(\xi)\right\|_{q}\gg
\|h\|_{p}\gg \left\|t^{2l}\Delta^l R_{\lambda,l,b}(f)\right\|_{p}.
\end{equation}

Adding estimates \eqref{est1} and \eqref{est2}, we get
\[
\|f - V_{l,t} f\|_p\asymp \left\|f-R_{\lambda,l,b}(f)\right\|_{p}+
t^{2l}\left\|\Delta^l R_{\lambda,l,b}(f)\right\|_{p}\ll I_{1}+I\ll I.
\]
This and \eqref{p} give the part (B) of the theorem.

Let us now prove the part (A).
If $p\le \frac{2d}{d+1}$, the proof follows from Theorem \ref{t2}.
Suppose $\wh{f}(\xi)=F_{0}(|\xi|)\ge 0$.
By \cite[Lemma~3.4]{DD},
\[
\left[f-V_{l,t}f\right]\!\wh{\;\;}(\xi)=\left[1-m_{l}(t|\xi|)\right]\!\wh{f}(\xi),
\]
where the function $m_{l}(s)$ satisfies for $d\ge 2$ the following conditions
\[
0<C_{1}s^{2l}\le 1-m_{l}(s)\le C_{2}s^{2l},\quad 0<s\le \pi,\qquad
0<m_{l}(s)\le v_{d,l}<1,\quad s\ge \pi.
\]
This gives
\begin{equation}\label{mlmin}
1-m_{l}(s)\asymp \min(1,s)^{2l},\quad s\ge 0.
\end{equation}

Define $h(x)=f(x)-V_{l,t}f(x)$ and its radial component by $h_{0}:=G_{0}$.
Using \eqref{GLT3} for the non-negative function
$g_{0}(s)=\left[1-m_{l}(ts)\right]F_{0}(s)$, we obtain
\begin{equation}\label{J}
J(z):=\int_{z/(bc)}^{\infty}u^{-1}\left(\int_{u/b}^{bu}\frac{g_{0}(v)}{v}\,dv\right)du\ll
\int_{0}^{2bc/z}u^{(d-1)/2-1}\left(\int_{0}^{u}v^{(d-1)/2}|h_{0}(v)|\,dv\right)du.
\end{equation}
Using \eqref{mlmin}, we get
\[
J(z)\asymp
\int_{z/(bc)}^{\infty}u^{-1}\left(\int_{u/b}^{bu}\frac{\min(1,tv)^{2l}F_{0}(v)}{v}\,dv\right)du,\quad
z>0.
\]
where, by \eqref{minF}, $\min(1,tv)^{2l}F_{0}(v)\in GM$. Therefore,
\eqref{GLT3-1} for $z>0$ yields
\[
\min(1,tz)^{2l}F_{0}(z)\ll J(z).
\]
Further, the latter and \eqref{J} imply
\begin{multline*}
I=\left\|\min(1,t|\xi|)^{2l}|\xi|^{d(1-1/p-1/q)}|\wh{f}(\xi)|\right\|_{q}
\asymp
\left(\int_{0}^{\infty}\left[z^{d(1-1/p-1/q)}\min(1,tz)^{2l}F_{0}(z)\right]^{q}z^{d-1}\,dz\right)^{1/q}
\\
\ll
\left(\int_{0}^{\infty}\left[z^{d(1-1/p-1/q)}J(z)\right]^{q}z^{d-1}\,dz\right)^{1/q}
\\
\ll
\left(\int_{0}^{\infty}\left[z^{d(1-1/p-1/q)}
\left(\int_{0}^{2bc/z}u^{(d-1)/2-1}\left(\int_{0}^{u}v^{(d-1)/2}|h_{0}(v)|\,dv\right)du\right)
\right]^{q}z^{d-1}\,dz\right)^{1/q}.
\end{multline*}
Changing variables $2bc/z\to z$, we obtain
\begin{equation}\label{Ill}
I
\ll
\left(\int_{0}^{\infty}z^{-qd(1-1/p-1/q)-d-1}
\left[
\int_{0}^{z}u^{(d-1)/2-1}\left(\int_{0}^{u}v^{(d-1)/2}|h_{0}(v)|\,dv\right)du\right]^{q}\,dz\right)^{1/q}.
\end{equation}
Let us now use \eqref{GLT4} for $\beta=0$ and
$\gamma=d\left(\frac{1}{p}+\frac{1}{q}-1\right)$. Since, in this case
\[
z^{-qd(1-1/p-1/q)-d-1}=z^{q\gamma-d-1}
\]
inequalities \eqref{Ill} and \eqref{GLT4} give
\begin{equation}\label{d2-Illh}
I\ll \left(\int_{\R^{d}}|h(x)|^{p}\,dx\right)^{1/p}=
\left\|f-V_{l,t}f\right\|_{p}.
\end{equation}
when $\frac{d}{p}-\frac{d+1}{2}<0$ and $p\le q$. The latter is $\frac{2d}{d+1}<p\le
q$. The proof of (A) is now complete. \hfill\qed

\subsection{Proof of Theorem \ref{t3} in the case $d=1$.}
We follow the proof of Theorem \ref{t1}. We have
\begin{equation*}
\omega_{l}(f,t)_{p}\asymp
\inf\left(\|f-g\|_{p}+t^{l}\|g^{(l)}\|_{p}\colon
g^{(l)}\in \mathrm{E}_{\lambda}\cap L^{p}(\R)\right),\quad \lambda=1/t.
\end{equation*}
To show the estimate of $\omega_{l}(f,t)_{p}$ from above, that is, to prove (B),
 we take $g_{\lambda}(x)$ such that
\[
\wh{g}_{\lambda}(\xi)=\left[1-(t|\xi|)^{l}\right]_{+}^{b}\wh{f}(\xi),\quad b\ge 3.
\]
Note that the function $g_{\lambda}$ is analogues to the Riesz-type means
$R_{\lambda,l,b}(f)$ and satisfies all required properties
\eqref{defeta}--\eqref{etamin} with $l$ in place of $2l$. In particular,
$1-\left[1-(ts)^{l}\right]_{+}^{b}\asymp \min(1,ts)^{l}$. Proceeding similarly to
the proof of (B) in the case $d\ge 2$, we arrive at the statement (B) in the
case $d=1$.

 Let us now show (A). Let $\frac{2d}{d+1}<p\le q<\infty$ and $\wh{f}\ge 0$. Equivalence \eqref{omega2} gives
\[
\omega_{l}(f,t)_{p}\asymp
\left\|f-S_{\lambda}(f)\right\|_{p}+t^{l}\left\|D_{l}S_{\lambda}(f)\right\|_{p}\ge
\left\|h\right\|_{p},
\]
where $h=f-S_{\lambda}(f)+t^{l}D_{l}S_{\lambda}(f)$.
Moreover,
$\wh{h}(\xi)=\left[1-\chi_{\lambda}(|\xi|)+(t|\xi|)^{l}\chi_{\lambda}(|\xi|)\right]\wh{f}(\xi)$.
Because of \eqref{thm1-min} and \eqref{minF} with $s\ge 0$, we have
$\wh{h}(\xi)=\min(1,ts)^{l}F_{0}(s)\in GM$. Using then \eqref{GLT} with
$\beta=0$, we obtain
\[
\omega_{l}(f,t)_{p}\gg \left\|h\right\|_{p}\gg
\left\||\xi|^{1-1/p-1/q}\wh{h}(\xi)\right\|_{p}=
\left\|\min(1,t|\xi|)^{l}|\xi|^{1-1/p-1/q}\wh{f}(\xi)\right\|_{p}.
\tag*{\qed}
\]

\section{Growth of Fourier coefficients via moduli of smoothness. The case of functions on $\mathbb{T}^d$}
\label{sec-td}

Let $f\in L^{p}(\T^{d})$, $1<p<\infty$, and
\[
\wh{f}_{n}=\int_{\T^{d}}f(x)e^{inx}\,dx,\quad n\in \Z^{d},\qquad
\bigl\|\wh{f}_{n}\bigr\|_{l^{q}(\Z^{d})}:=\biggl(\sum_{n\in
\Z^{d}}\bigl|\wh{f}_{n}\bigr|^{q}\biggl)^{1/q}.
\]
In the paper \cite[Th. 4.1]{Di} the following was proved
\[
\left\|\min(1,t|n|)^{\theta
l}\bigl|\wh{f}_{n}\bigl|\right\|_{l^{p'}(\Z^{d})}
\ll
\Omega_l(f,t)_p,\quad 1<p\le 2,
\]
where $\Omega_l(f,t)_p$ is given by \eqref{Ot-d2} and \eqref{Ot-d1} with
$\|\cdot\|_p=\|\cdot\|_{L^p(\T^{d})}$.

The goal of the section is to obtain the generalization of this result which is
a periodic analogue of inequalities \eqref{bray1}--\eqref{bray2}.

\begin{theorem}\label{tT}
Let
$f\in L^{p}(\T^{d})$, $d\ge 1$, $1<q<\infty$ and
$\gamma=d\left(\frac{1}{p}+\frac{1}{q}-1\right)$.

\textup{(A)} Let $1<p\le 2$. Then for $p\le q\le p'$ we have
$\bigl\{(1+|n|)^{-\gamma}\wh{f}_{n}\bigr\}
\in {l^{q}(\Z^{d})}
$, and
\begin{equation}\label{modT}
\left\|\min(1,t|n|)^{\theta l}(1+|n|)^{-\gamma}\bigl|\wh{f}_{n}\bigl|\right\|_{l^{q}(\Z^{d})}\ll
\Omega_l(f,t)_p.
\end{equation}

\textup{(B)} Let $2\le p<\infty$, $\bigl\{(1+|n|)^{-\gamma}\wh{f}_{n}\bigr\}
\in {l^{q}(\Z^{d})}$, and $\max\left\{q,q'\right\}\le p$. Then
\begin{equation}\label{modT11}
\left\|\min(1,t|n|)^{\theta l}(1+|n|)^{-\gamma}\bigl|\wh{f}_{n}\bigl|\right\|_{l^{q}(\Z^{d})}\gg
\Omega_l(f,t)_p.
\end{equation}
\end{theorem}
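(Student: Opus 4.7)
The plan is to adapt the proof of Theorem \ref{t1} line by line, replacing the Fourier integral on $\R^d$ with Fourier series on $\T^d$, the smooth cutoff $R_{\lambda,l,b}(f)$ with its periodic analogue, and Pitt's inequality \eqref{PittIneq}, \eqref{PittIneq2} with its periodic counterpart involving the weight $(1+|n|)^{-\gamma}$.

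First, I would establish an appropriate realization of $\Omega_l(f,t)_p$ on $\T^d$. For $d\ge 2$, define $R^{\mathrm{per}}_{\lambda,l,b}(f)$ by the Fourier multiplier $[R^{\mathrm{per}}_{\lambda,l,b}(f)]\!\wh{\;\;}_n = \eta_{l,b}(t|n|)\wh{f}_n$ with $\lambda=1/t$; the results of \cite{DD} yield
\[
\Omega_l(f,t)_p \asymp \|f-R^{\mathrm{per}}_{\lambda,l,b}(f)\|_p + t^{\theta l}\|\Delta^l R^{\mathrm{per}}_{\lambda,l,b}(f)\|_p,
\]
and similarly for $d=1$ using partial Fourier sums $S_\lambda(f)$ as in \eqref{omega2}, replacing $D_l$ by the appropriate periodic analogue (conjugate function in the odd case). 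As in \eqref{I}--\eqref{Iq}, the crucial decomposition
\[
\min(1,t|n|)^{\theta l} \asymp [1-\eta_{l,b}(t|n|)] + (t|n|)^{\theta l}\eta_{l,b}(t|n|)
\]
and the non-negativity equivalence \eqref{h1h2eq} yield
\[
\left\|\min(1,t|n|)^{\theta l}(1+|n|)^{-\gamma}|\wh{f}_n|\right\|_{l^q}
\asymp
\left\|(1+|n|)^{-\gamma}[f-R^{\mathrm{per}}_{\lambda,l,b}(f)]\!\wh{\;\;}_n\right\|_{l^q}
+ t^{\theta l}\left\|(1+|n|)^{-\gamma}[\Delta^l R^{\mathrm{per}}_{\lambda,l,b}(f)]\!\wh{\;\;}_n\right\|_{l^q}.
\]

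Second, I would invoke the periodic Pitt inequality: for $1<p\le q<\infty$ and $0\le\gamma=d(1/p+1/q-1)<d/q$,
\[
\Bigl\|(1+|n|)^{-\gamma}\wh{g}_n\Bigr\|_{l^q(\Z^d)} \ll \|g\|_{L^p(\T^d)},
\]
which, applied to $g=f-R^{\mathrm{per}}_{\lambda,l,b}(f)$ and $g=\Delta^l R^{\mathrm{per}}_{\lambda,l,b}(f)$ respectively (combined with the realization), gives (A) in the range $p\le q\le p'$. For (B) one uses the dual inequality
\[
\Bigl\|(1+|n|)^{-\gamma}\wh{g}_n\Bigr\|_{l^q(\Z^d)} \gg \|g\|_{L^p(\T^d)}, \qquad 1<q\le p<\infty,\ \gamma\le 0,
\]
valid precisely when $q'\le p$, i.e., $\max\{q,q'\}\le p$, matching the hypothesis. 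In both cases the weight $(1+|n|)^{-\gamma}$ rather than $|n|^{-\gamma}$ is what accommodates $n=0$.

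The main obstacle is the periodic Pitt inequality itself: the version on $\R^d$ in \eqref{PittIneq} has the homogeneous weight $|\xi|^{-\gamma}$, and transferring it to $\T^d$ requires replacing this by $(1+|n|)^{-\gamma}$ and handling $n=0$ separately. This can be done either by a de Leeuw--type transference argument (embedding $\T^d$-multipliers into $\R^d$-multipliers and using \eqref{PittIneq}), by invoking known results on weighted Hausdorff--Young on $\T^d$, or by first splitting into $n=0$ and $n\ne 0$ and using the trivial bound $|\wh{f}_0|\le\|f\|_1\le\|f\|_p$ for the constant term. Once this lemma is in hand, the rest of the argument is a routine transcription of the $\R^d$ proof.
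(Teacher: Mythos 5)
Your reduction — periodic realization of $\Omega_l(f,t)_p$, the decomposition $\min(1,t|n|)^{\theta l}\asymp[1-\eta_{l,b}(t|n|)]+(t|n|)^{\theta l}\eta_{l,b}(t|n|)$, then a weighted Pitt-type inequality on $\T^d$ in each direction — is exactly the paper's outline. But the step you defer as ``the main obstacle'' is precisely the mathematical content of the paper's proof: it proves the two periodic Pitt inequalities \eqref{PittT} and \eqref{PittTTT}, and your proposal neither proves them nor sketches a route that works as stated. For (A), the paper extends $f$ by zero to $f_*$ on $\R^d$, so $\wh{f}_n=\wh{f_*}(n)$ with $\wh{f_*}$ entire of exponential type $\pi\ol{e}$, and discretizes via the Nikolskii sampling inequality \eqref{ineqg}; the real difficulty, which you do not address, is that the weight $(1+|\xi|)^{-\gamma}$ is not entire, so \eqref{ineqg} cannot be applied to $(1+|\xi|)^{-\gamma}\wh{f_*}(\xi)$. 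The paper gets around this by constructing the even positive entire function of type one $\psi_{\gamma}(u)=j_{\nu}\bigl(\frac{u+i}{2}\bigr)j_{\nu}\bigl(\frac{u-i}{2}\bigr)$, $2\nu+1=\gamma$, with $\psi_\gamma(|u|)\asymp(1+|u|)^{-\gamma}$, applying \eqref{ineqg} to $\wh{f_*}\,\psi_\gamma(|\cdot|)$ and only then Pitt's inequality \eqref{PittIneq} on $\R^d$. Your listed alternatives do not fill this hole: de Leeuw transference concerns $L^p\to L^p$ multipliers, not weighted $(L^p,\ell^q)$ Hausdorff--Young-type bounds; ``known weighted Hausdorff--Young results on $\T^d$'' is an appeal to unspecified literature; and splitting off $n=0$ only handles the constant term, not the transfer of the inequality for $n\ne 0$.

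For (B) the gap is larger: you simply assert ``the dual inequality'' $\|f\|_{L^p(\T^d)}\ll\|(1+|n|)^{-\gamma}\wh{f}_n\|_{\ell^q}$ for $\max\{q,q'\}\le p$. On $\T^d$ this cannot be obtained, as on $\R^d$, by swapping the roles of $f$ and $\wh f$ in the forward inequality (the transform now goes from functions to sequences), and the zero-extension/sampling route does not reverse, since an entire function of type exactly $\pi$ is not controlled by its integer samples. The paper derives \eqref{PittTTT} from an external, nontrivial result of Nursultanov, inequality \eqref{erlan}, combined with Hardy's inequality for rearrangements. (One could alternatively obtain \eqref{PittTTT} by genuinely dualizing the periodic inequality \eqref{PittT} — the exponent ranges do match under $\max\{q,q'\}\le p$ — but you carry out neither that duality argument nor a proof of \eqref{PittT} itself, so both halves of the theorem rest on unproved lemmas.) The surrounding reductions you describe (periodic realization from \cite{Di}, \cite{DIT}, the multiplier decomposition, Young's inequality for the periodic means) are indeed routine transcriptions of the proof of Theorem \ref{t1}; the missing Pitt inequalities are not.
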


The proof of this theorem is similar to the proof of estimates
\eqref{bray1}-\eqref{bray2} from Theorem \ref{t1}. The key points are Pitt's
inequalities of form
\begin{equation}\label{PittT}
\left\|\wh{f}_{n}(1+|n|)^{-\gamma}\right\|_{l^{q}(\Z^{d})}\ll \|f\|_{L^{p}(\T^{d})}, \qquad 1<p\le 2
\end{equation}
and
\begin{equation}\label{PittTTT}
\left\|\wh{f}_{n}(1+|n|)^{-\gamma}\right\|_{l^{q}(\Z^{d})}\gg \|f\|_{L^{p}(\T^{d})}, \qquad p\ge 2,
\end{equation}
under the corresponding conditions on $q$, as well as the realization
results for the $K$-functionals in the periodic case (see \cite{Di} and
\cite{DIT}).

\begin{proof}[Proof of \eqref{PittT}]
Let us show that the proof of \eqref{PittT} follows from Pitt's inequality for functions on
$\R^{d}$.
 Note that $\gamma\ge 0$.
 Let $f_{*}$ be the function on $\R^{d}$ such that $f_{*}=f$ on $(-\pi,\pi]^{d}$ and
$f_{*}=0$ outside $(-\pi,\pi]^{d}$.
Then
\[
\|f_{*}\|_{L^{p}(\R^{d})}=\|f\|_{L^{p}(\T^{d})},\qquad
\wh{f_{*}}(\xi)=\int_{\T^{d}}f(x)e^{i\xi x}\,dx,\quad \xi\in \R^{d},\qquad
\wh{f_{*}}(n)=\wh{f}_{n},\quad n\in \Z^{d}.
\]

Further, we use the results from \cite[Ch.~3]{Ni}. For an entire function $g$ of
exponential type $\sigma e$, $\sigma>0$, we have
\begin{equation}\label{ineqg}
\|g\|_{l^{q}(\Z^{d})}\le (1+\sigma)^{d}\|g\|_{L^{q}(\R^{d})},\quad q\ge 1.
\end{equation}

Note that the function $\wh{f_{*}}$ is an entire function of exponential type
$\pi \ol{e}$, where $\ol{e}=(1,\ldots,1)\in \R^{d}$. We cannot use
\eqref{ineqg} since the weight function $|\xi|^{-\gamma}$, $\gamma\ge 0$, is
not an entire function. However, it is possible to construct a positive radial
entire function of exponential (spherical) type such that for $|\xi|\ge 1$ this
function is equivalent to $|\xi|^{-\gamma}$.

We consider
\[
\psi_{\gamma}(u)=j_{\nu}\Bigl(\frac{u+i}{2}\Bigr)j_{\nu}\Bigl(\frac{u-i}{2}\Bigr),\qquad
u\in \mathbb{C},\quad 2\nu+1=\gamma\ge 0,
\]
where $j_{\nu}$ is the normalized Bessel function. The function $\psi_{\gamma}$
is an even positive entire function of type~1. Positivity of $\psi_{\gamma}$
follows from the fact that all its zeros lie on lines $t\pm i$, $t\in \R$.
The asymptotic expansion of Bessel
functions \cite[formula 9.2.1]{AS} yields, for $|z|\to\infty$,
\[
j_{\nu}(z)=\frac{C_{\nu}}{z^{\nu+1/2}}
\left(\cos\left(z-c_{\nu}\right)+O(|z|^{-1})\right),\qquad
\Re z\ge 0,\quad |\Im z|\ll 1.
\]
This and $\psi_{\gamma}(0)>0$ give $\psi_{\gamma}(u)\asymp (1+|u|)^{-\gamma}$,
$u\in \R$.

Let us now consider the radial function $\psi_{\gamma}(|\xi|)$, $\xi\in
\R^{d}$, which is an entire function of (spherical) type~1, and therefore, of
type $\ol{e}$. Also,
\begin{equation}\label{psi}
\psi_{\gamma}(|\xi|)\asymp (1+|\xi|)^{-\gamma},\quad \xi\in \R^{d}.
\end{equation}

Define $g(\xi)=\wh{f_{*}}(\xi)\psi_{\gamma}(|\xi|)$, which is an entire
function of type $(\pi+1)\ol{e}$. Using \eqref{psi}, we get
\[
\|g\|_{l^{q}(\Z^{d})}=\biggl(\sum_{n\in
\Z^{d}}\left|\wh{f_{*}}(n)\psi_{\gamma}(|n|)\right|^{q}\biggr)^{1/q}\asymp
\biggl(\sum_{n\in
\Z^{d}}\left|\wh{f}_{n}(1+|n|)^{-\gamma}\right|^{q}\biggr)^{1/q},
\]
\[
\|g\|_{L^{q}(\R^{d})}=\biggl(\int_{\R^{d}}\left|\wh{f_{*}}(\xi)\psi_{\gamma}(|\xi|)\right|^{q}\,d\xi\biggr)^{1/q}\ll
\biggl(\int_{\R^{d}}\left|\wh{f_{*}}(\xi)|\xi|^{-\gamma}\right|^{q}\,d\xi\biggr)^{1/q}.
\]
Then by \eqref{ineqg} and Pitt's inequality for function on $\R^{d}$,
we have
\begin{align*}
\left\|\wh{f}_{n}(1+|n|)^{-\gamma}\right\|_{l^{q}(\Z^{d})}
&\asymp
\|g\|_{l^{q}(\Z^{d})}\le (\pi+2)^{d}\|g\|_{L^{q}(\R^{d})}\ll
\left\|\wh{f_{*}}(\xi)|\xi|^{-\gamma}\right\|_{L^{q}(\R^{d})}
\\
&\ll \|f_{*}\|_{L^{p}(\R^{d})}=\|f\|_{L^{p}(\T^{d})}.
\end{align*}
Thus we have proved the Pitt inequality \eqref{PittT} for function on $\T^{d}$.
\end{proof}
\begin{proof}[Proof of \eqref{PittTTT}]
The following inequality is a consequence of \cite[Th. 7]{nurs} and Hardy's
inequality for rearrangements:
\begin{equation}\label{erlan}
\|f\|_{L_p}\ll \biggl(\sum_{k\in \mathbb Z^d}\prod_{j=1}^d
(|k_j|+1)^{q/p'-1}|\wh{f}_{k}|^q\biggr)^{1/q},\qquad
\max\left\{q,q'\right\}\le p.
\end{equation}
The latter immediately gives \eqref{PittTTT}. We would like to thank Erlan
Nursultanov for drawing our attention to his result \eqref{erlan}, which simplifies the
proof.
\end{proof}

\section{An equivalence result for periodic functions}

A complex null-sequence $a=\{a_n\}_{n\in \N}$ is said to be \textit{general
monotone}, written $a\in {GM}$, if (see \cite{compt}) there exists $c>1$ such
that ($\Delta a_k=a_k-a_{k+1}$)
\[
\sum_{k=n}^{\infty} |\Delta a_k|\ll \sum_{k=[n/c]}^{\infty}
\frac{|a_k|}{k},\quad n\in \N.
\]

\begin{theorem}\label{t4}
Let $f\in L^p(\mathbb{T})$, $1< p<\infty$, and
\[
f(x)\sim \sum_{n=1}^\infty (a_n \cos nx + b_n \sin nx),
\]
where nonnegative $\{a_n\}_{n\in \N}$, $\{b_n\}_{n\in \N}$ are general monotone sequences. Then
\begin{equation}\label{pl}
\omega_l\left(f, t\right)_p\asymp
\biggl(\sum_{\nu=1}^{\infty}
\min(1,\nu t)^{lp} \nu^{p-2} \left(a_\nu^p+b_\nu^p\right)\biggr)^{1/p}.
\end{equation}
\end{theorem}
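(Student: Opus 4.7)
The plan is to follow the strategy of Theorem~\ref{t3} in the periodic setting, combining a realization of $\omega_l(f,t)_p$ with Hardy--Littlewood--Paley type equivalences for trigonometric series with non-negative general monotone coefficients.

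First, Theorem~\ref{tT} already supplies half of \eqref{pl} once we set $q=p$, so that $\gamma=2/p-1$ and the weight $(1+|n|)^{-\gamma p}$ becomes $(1+|n|)^{p-2}$. The admissibility $p\le q\le p'$ in part~(A) reduces to $p\le 2$, while $\max\{q,q'\}\le p$ in part~(B) reduces to $p\ge 2$. Hence for $1<p\le 2$ the right-hand side of \eqref{pl} is already dominated by $\omega_l(f,t)_p$, and for $p\ge 2$ it dominates $\omega_l(f,t)_p$. It remains to prove the reverse inequality in each range.

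For this I would use the realization of the periodic modulus, the analogue of \eqref{omega2}: for $n=[1/t]$,
\[
\omega_l(f,t)_p\asymp \|f-S_n(f)\|_p+t^l\|D_l S_n(f)\|_p,\quad 1<p<\infty,
\]
where $S_n(f)$ is the $n$-th partial Fourier sum and $D_l$ is the $L^p$-bounded derivative operator with symbol $|\nu|^l$. The second ingredient is the Hardy--Littlewood--Paley equivalence for non-negative $GM$ coefficients: if $g\sim\sum_{\nu\ge 1}(\alpha_\nu\cos\nu x+\beta_\nu\sin\nu x)$ with $\alpha_\nu,\beta_\nu\ge 0$ in $GM$, then
\[
\|g\|_p^p\asymp \sum_{\nu=1}^{\infty}\nu^{p-2}(\alpha_\nu^p+\beta_\nu^p),\quad 1<p<\infty,
\]
which is the periodic sequential analogue of \eqref{aseqhff}. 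Applying it to $f-S_n(f)$ (coefficients $\{a_\nu,b_\nu\}_{\nu>n}$, automatically in $GM$) and to $D_l S_n(f)$ (coefficients $\nu^l a_\nu,\nu^l b_\nu$ for $\nu\le n$, zero otherwise) yields, respectively,
\[
\|f-S_n(f)\|_p^p\asymp \sum_{\nu>n}\nu^{p-2}(a_\nu^p+b_\nu^p),\qquad t^{lp}\|D_l S_n(f)\|_p^p\asymp \sum_{\nu=1}^{n}(\nu t)^{lp}\nu^{p-2}(a_\nu^p+b_\nu^p).
\]
For the second line one has to verify that $\{\nu^l a_\nu\}_{\nu\le n}$ (extended by zero) is in $GM$; this is the discrete analogue of Lemma~\ref{lemGMcond} with the monotone multiplier $\nu^l$, for which $\sum_{\nu=1}^{cm}|\Delta(\nu^l)|\asymp m^l$.

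Summing the two estimates and using $\min(1,\nu t)^l=(\nu t)^l$ for $\nu\le 1/t$ and $\min(1,\nu t)^l=1$ for $\nu>1/t$ reassembles the sum into \eqref{pl}. The main obstacle is the careful verification of the Hardy--Littlewood--Paley equivalence and of the multiplier lemma in the $GM$ setting for trigonometric series: both are sequential analogues of statements already established in the paper for Fourier transforms on $\R^d$, but the transfer requires attention to the $GM$ constant $c$ and to the interplay between the cutoff $\nu\approx 1/t$ and the dyadic block structure of $GM$ sequences.
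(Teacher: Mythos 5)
Your overall skeleton (realization of $\omega_l(f,1/n)_p$ via partial sums $S_n$, plus Hardy--Littlewood type estimates for series with nonnegative $GM$ coefficients, with Theorem~\ref{tT} at $q=p$ supplying one direction in each range of $p$) is close in spirit to the paper's proof, but the central step of your plan contains a genuine gap. You claim the two piecewise equivalences $\|f-S_n(f)\|_p^p\asymp\sum_{\nu>n}\nu^{p-2}(a_\nu^p+b_\nu^p)$ and $t^{lp}\|D_lS_n(f)\|_p^p\asymp\sum_{\nu\le n}(\nu t)^{lp}\nu^{p-2}(a_\nu^p+b_\nu^p)$, justified by the assertion that the truncated sequences $\{a_\nu\}_{\nu>n}$ and $\{\nu^l a_\nu\}_{\nu\le n}$ are ``automatically in $GM$''. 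Neither the assertion nor the equivalences are true with constants uniform in $n$. For the $GM$ claim: take the monotone (hence $GM$) sequence $a_k=\chi_{\{k\le n+1\}}$; its tail beyond $n$ is a single spike, for which $\sum_{k\ge n+1}|\Delta \tilde a_k|=1$ while $\sum_{k\ge [(n+1)/c]}\tilde a_k/k\asymp 1/n$. For the equivalences themselves, consider a fixed admissible $f$ with block-constant coefficients: $a_k=c_j$ for $4^j\le k<2\cdot 4^j$ and $a_k=0$ otherwise, with $c_j=2^{-j^3}$; this sequence is $GM$ with absolute constants and $f\in L^p$. Truncating at $n_j=2\cdot 4^j-2$ leaves a single harmonic of size $c_j$ plus negligible later blocks, so $\|f-S_{n_j}\|_p^p\asymp c_j^p$, whereas $\sum_{\nu>n_j}\nu^{p-2}a_\nu^p\asymp 4^{j(p-2)}c_j^p$; for $p\ne 2$ the ratio is unbounded in $j$, so the equivalence you need fails in exactly the direction your argument requires (upper bound for $p<2$, lower bound for $p>2$). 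The theorem itself survives because the discrepancy is dominated by the \emph{other} block of the sum ($n^{-lp}\sum_{\nu\le n}\nu^{lp+p-2}a_\nu^p$), i.e.\ the two realization pieces must be allowed to borrow from each other's blocks; a strictly piecewise matching cannot work.

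This cross-term bookkeeping is precisely what the paper's proof does and what your proposal omits. The paper never claims the truncated sequences are $GM$; instead it uses the one-sided Lemma~\ref{lem} of Askey--Wainger: for $\|f-S_n\|_p$ it applies part (A) with $\beta_j=\sum_{s\ge\max(j,n)}|\Delta a_s|$, then the $GM$ condition of the \emph{full} sequence pulls the right-hand side back to $\sum_{s\ge [n/c]}a_s/s$, and Hardy's inequality distributes the head portion $[n/c]\le s\le n$ into the term $I_1$; for $\|S_n^{(l)}\|_p$ it estimates $\sum_{s=\nu}^n|\Delta(s^la_s)|$ directly, which produces the boundary term $n^l\sum_{m\ge n}a_m/m$ that again lands in the other block. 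Conversely, for the lower bound it combines part (B) of Lemma~\ref{lem} with the averaging inequalities \eqref{pachki} and Hardy's inequality, applied to partial sums of length $2n$ of the full series rather than to the truncated pieces. So to repair your proof you would have to abandon the two separate equivalences and redo essentially this analysis; the full-sequence Hardy--Littlewood equivalence for nonnegative $GM$ coefficients that you defer as a ``verification'' is not the main issue -- the truncation step is.
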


We will use the following lemma (see \cite{AW}).

\begin{lemma}\label{lem}
Let $1 < p < \infty $ and let $\sum_{\nu = 1}^{\infty} a_{\nu} \cos \nu x$ be the Fourier series of $f\in L^1(\T)$.

\smallbreak
\noindent
\textup{(A)} If the sequences $\{a_n\}$ and $\{\beta_n\}$ are such that
\begin{equation}\label{cond-coeff}
\sum_{k=\nu}^{\infty} |\Delta a_{k}|\ll
\beta_\nu,\quad \nu\in \mathbb{N},
\end{equation}
then
\begin{equation}\label{cond-norm}
 \|f\|_p^p\ll \sum_{\nu = 1}^{\infty} \nu^{p-2} \beta_{\nu}^p.
\end{equation}

\smallbreak
\noindent
\textup{(B)} If $a=\{a_n\}$ is a nonnegative sequence, then
\begin{equation}\label{ask}
 \sum_{n=1}^\infty \biggl( \sum_{k=[n/2]}^n
a_{k}\biggr)^p n^{-2}\ll \|f\|_{p} ^p.
\end{equation}
\end{lemma}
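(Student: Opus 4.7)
The statement is the two-part Askey--Wainger lemma relating the $L^p$ norm of a cosine series to a weighted sum of (generalized differences of) its coefficients. The two parts are independent: (A) is an upper bound for $\|f\|_p$ via a pointwise-plus-Hardy argument, and (B) is a lower bound exploiting positivity of the coefficients plus orthogonality.

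\textbf{Part (A).} The plan is a pointwise-plus-Hardy argument. I apply Abel summation to write $f(x)=\sum_{k\ge 1}(\Delta a_k)D_k(x)$ with $D_k(x)=\sum_{\nu=1}^{k}\cos\nu x$, and use the classical kernel bound $|D_k(x)|\ll\min(k,1/|x|)$ on $(0,\pi]$. For $x\in I_n:=(\pi/(n+1),\pi/n]$ this splits the series as
\[
|f(x)|\ll\sum_{k\le n}k|\Delta a_k|+n\sum_{k>n}|\Delta a_k|.
\]
The tail is $\le n\beta_{n+1}$ directly by \eqref{cond-coeff}. For the head, set $\gamma_\nu:=\sum_{k\ge\nu}|\Delta a_k|\le\beta_\nu$; reverse Abel summation gives $\sum_{k\le n}k|\Delta a_k|=\sum_{k\le n}\gamma_k - n\gamma_{n+1}\le\sum_{k\le n}\beta_k$. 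Since $|I_n|\asymp n^{-2}$ and $f$ has the symmetry of a cosine series, integrating and applying the weighted discrete Hardy inequality $\sum_n n^{-2}(\sum_{k\le n}\beta_k)^p\ll\sum_n n^{p-2}\beta_n^p$ (valid for $p>1$) yields \eqref{cond-norm}.

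\textbf{Part (B).} Here positivity of $\{a_k\}$ is essential. First I reduce to dyadic blocks: for $n\in[2^j,2^{j+1})$ one has $A_n:=\sum_{k=[n/2]}^n a_k\le B_j:=\sum_{k=2^{j-1}}^{2^{j+1}}a_k$, whence $\sum_n n^{-2}A_n^p\ll\sum_j 2^{-j}B_j^p$. Define the block $g_j(x):=\sum_{k\in[2^{j-1},2^{j+1}]}a_k\cos kx$; positivity of $a_k$ together with $\cos kx\ge 1/2$ on an interval of measure $\asymp 2^{-j}$ force $\|g_j\|_p^p\gg 2^{-j}B_j^p$. For $p\ge 2$ the Littlewood--Paley theorem $\|f\|_p\asymp\|(\sum_j|g_j|^2)^{1/2}\|_p$, combined with the pointwise embedding $\ell^2\hookrightarrow\ell^p$, yields $\|f\|_p^p\gg\sum_j\|g_j\|_p^p\gg\sum_j 2^{-j}B_j^p$, proving \eqref{ask}.

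\textbf{Main obstacle.} For $1<p<2$ the Littlewood--Paley square function gives the inequality in the wrong direction, so the dyadic estimate $\sum_j 2^{-j}B_j^p\ll\|f\|_p^p$ cannot be extracted from it directly. One must exploit positivity more decisively: since $a_k\ge 0$, the analytic companion $f+i\wt f$ lies in $H^p$ with comparable norm by M.~Riesz, and a Paley-type inequality on $H^p$ applied to the lacunary dyadic blocks $g_j$ recovers the required bound. This is essentially the argument carried out in \cite{AW}, which treats both regimes uniformly.
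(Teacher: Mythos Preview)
The paper does not prove this lemma: it is quoted verbatim from \cite{AW} with the single line ``We will use the following lemma (see \cite{AW}).'' So there is no proof to compare against, and your sketch should be judged on its own.

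Part~(A) is fine. Abel summation, the Dirichlet-kernel bound $|D_k(x)|\ll\min(k,1/|x|)$, the reverse summation $\sum_{k\le n}k|\Delta a_k|=\sum_{k\le n}\gamma_k-n\gamma_{n+1}$, and the weighted Hardy inequality $\sum_n n^{-2}\bigl(\sum_{k\le n}\beta_k\bigr)^p\ll\sum_n n^{p-2}\beta_n^p$ are all correct and assemble exactly as you describe.

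Part~(B) for $p\ge 2$ is also essentially correct. One small slip: your blocks $g_j$ over $[2^{j-1},2^{j+1}]$ overlap, so they are not a Littlewood--Paley decomposition of $f$; use the disjoint blocks $\tilde g_j$ over $[2^j,2^{j+1})$ instead, note $B_j\le\tilde B_{j-1}+\tilde B_j+\tilde B_{j+1}$, and the argument goes through unchanged.

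The gap is in your treatment of $1<p<2$. You correctly observe that Littlewood--Paley gives the wrong direction there, but the appeal to ``a Paley-type inequality on $H^p$ applied to the lacunary dyadic blocks'' is too vague to constitute a proof, and it is in any case unnecessary. The range $1<p\le 2$ is actually the \emph{easy} half: by H\"older on each block,
\[
\sum_{n\ge 1} n^{-2}\Bigl(\sum_{k=[n/2]}^{n}a_k\Bigr)^{p}
\le \sum_{n\ge 1} n^{p-3}\sum_{k=[n/2]}^{n}a_k^{p}
\asymp \sum_{k\ge 1} k^{p-2}a_k^{p},
\]
and the Hardy--Littlewood (Pitt) inequality --- exactly \eqref{PittT} in the paper with $d=1$, $q=p$, $\gamma=2/p-1$ --- gives $\sum_k k^{p-2}|a_k|^p\ll\|f\|_p^p$ for $1<p\le 2$. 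This proves \eqref{ask} in that range without invoking $H^p$ at all (and, incidentally, without using $a_k\ge 0$). Positivity is genuinely needed only for $p>2$, where your Littlewood--Paley argument already handles it.
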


\begin{proof}[Proof of Theorem \ref{t4}] First, we remark that since
$1< p<\infty$ it is sufficient to prove
that
\[
\omega_l^p\Bigl(f,\frac{1}{n}\Bigr)_p\asymp I_1+I_2,
\]
where
\[
I_1=n^{-lp} \sum_{\nu=1}^{n} a_\nu^p
\nu^{(l+1)p-2},\quad I_2=\sum_{\nu=n+1}^{\infty}
a_\nu^p \nu^{p-2},
\]
\[
f(x)\sim \sum_{n=1}^\infty a_n \cos nx,\quad \{a_n\}_{n\in \N}\in
GM.
\]
We will also use the realization result for the modulus of smoothness (see
\cite{DIT}), that is,
\begin{equation}\label{real}
\omega_l^p\Bigl(f,\frac{1}{n}\Bigr)_p\asymp \bigl\|f(x)-T_n(x)\bigr\|_p^p+
n^{-lp}\bigl\|T_n^{(l)}(x)\bigr\|_p^p,
\end{equation}
where $T_n(f)$ is the $n$-th almost best approximant, i.e.,
$\left\|f(x)-T_n(x)\right\|_p\ll E_n(f)_p.$ In particular we can take $T_n$ as
$S_n=S_n(f)$, i.e., the $n$-th partial sum of $\sum_{k=1}^\infty a_k \cos kx$.

Let us prove estimate of $I_1$ and $I_2$ from above. Since $\{a_n\}\in GM$, we have
\begin{equation}\label{gm-ocenka}
a_\nu\le \sum_{l=\nu}^{\infty} |\Delta a_l|\ll \sum_{l=[\nu/c]}^{\infty} \frac{a_l}{l},
\end{equation}
then H\"{o}lder's inequality yields
\begin{align*}
I_1
&\ll
n^{-lp}\sum_{\nu=1}^{n}\biggl(\sum_{j=[\nu/c]}^{\infty} \frac{a_j}{j}\biggr)^p
\nu^{(l+1)p-2}
\\
&\ll
n^{-lp}
\sum_{\nu=1}^{n}\biggl(\sum_{j=[\nu/c]}^{n} \frac{a_j}{j}\biggr)^p
\nu^{(l+1)p-2}
+
n^{p-1}\biggl(\sum_{j=n}^{\infty} \frac{a_j}{j}\biggr)^p
\\
&\ll
n^{-lp}
\sum_{\nu=1}^{n}\biggl(\sum_{j=\nu}^{n} \frac{a_j}{j}\biggr)^p
\nu^{(l+1)p-2}
+
\sum_{j=n+1}^{\infty} a_j^p j^{p-2}=:I_{3}+I_{2}.
\end{align*}
To estimate $I_2$ and $I_3$, we are going to use the following inequalities
\begin{equation}\label{pachki}
\sum_{s=n}^{\infty}a_s\ll \sum_{s=n}^{\infty}\frac{1}{s} \sum_{m=[s/2]}^{s}a_m\quad\text{and}\quad
\sum_{s=1}^{n}a_s\ll \sum_{s=1}^{2n}\frac{1}{s} \sum_{m=[s/2]}^{s}a_m.
\end{equation}
Then by Hardy's inequality \cite{HLP}, we have
\begin{align*}
I_{3}&\ll
n^{-lp}
\sum_{\nu=1}^{n}\biggl(\sum_{j=\nu}^{2n}\frac{1}{j^2} \sum_{m=[j/2]}^{l}a_m \biggr)^p
\nu^{(l+1)p-2}
\\
&\ll
n^{-lp}
\sum_{j=1}^{2n}\biggl(\sum_{m=[j/2]}^{j}a_m \biggr)^p
j^{lp-2}.
\end{align*}
Then Lemma \ref{lem} (B) and \eqref{real} yield
\[
I_{3}\ll
n^{-lp}
\biggl\|\sum_{\nu=1}^{2n} \nu^l a_\nu \cos \nu x\biggr\|_p^p
\asymp n^{-lp}\bigl\|S^{(l)}_{2n}(f)\bigr\|_p^p\ll \omega_l^p\Bigl(f,\frac{1}{2n}\Bigr)_p
\ll \omega_l^p\Bigl(f,\frac{1}{n}\Bigr)_p.
\]
Further, using \eqref{gm-ocenka}, \eqref{pachki}, and Hardy' inequality, we have
\begin{align*}
I_{2}&\ll
\sum_{j=n+1}^{\infty} j^{p-2} \biggl( \sum_{s=[j/c]}^{\infty} \frac{a_s}{s} \biggr)^p
\ll
\sum_{j=n+1}^{\infty} j^{p-2} \biggl( \sum_{s=[j/c]}^{\infty} \frac{1}{s^2} \sum_{m=[s/2]}^{s} a_m \biggr)^p
\\
&\ll
\sum_{s=[n/c]}^{\infty} s^{-2} \biggl( \sum_{m=[s/2]}^{s} a_m \biggr)^p
\ll
\sum_{s=2n}^{\infty} s^{-2} \biggl( \sum_{m=[s/2]}^{s} a_m \biggr)^p
+
n^{-lp}
\sum_{s=1}^{2n}s^{lp-2}\biggl(\sum_{m=[s/2]}^{s}a_m \biggr)^p.
\end{align*}
The last sum was estimated above. Again, by Lemma \ref{lem} (B) and
\eqref{real},
\[
\sum_{s=2n}^{\infty} s^{-2} \biggl( \sum_{m=[s/2]}^{s} a_m \biggr)^p\ll
\biggl\|\sum_{\nu=n}^{\infty} a_\nu \cos \nu x\biggr\|_p^p
\ll \omega_l^p\left(f,\frac{1}{n}\right)_p.
\]
So, we showed that
\[
I_1+I_2\ll \omega_l^p\Bigl(f,\frac{1}{n}\Bigr)_p.
\]

To prove the reverse, we use Lemma \ref{lem} (A), the definition of the GM
class, H\"{o}lder's and Hardy's inequalities:
\begin{align*}
\|f-S_n\|_p^p&\ll
\sum_{j=1}^{\infty} \beta_j^{p-2} j^{p-2}
\ll
n^{p-1}\biggl(\sum_{s=n}^\infty|\Delta a_s| \biggr)^p+
\sum_{j=n}^{\infty} j^{p-2}\biggl(\sum_{s=l}^\infty|\Delta a_s| \biggr)^p
\\
&\ll
n^{p-1}\biggl(\sum_{s=[n/c]}^\infty\frac{a_s}{s} \biggr)^p+
\sum_{j=n}^{\infty} j^{p-2}\biggl(\sum_{s=[j/c]}^\infty\frac{a_s}{s} \biggr)^p
\ll
\sum_{j=[n/c]}^{\infty} a_j^p j^{p-2}
\ll I_1+I_2,
\end{align*}
where $\beta_j=\sum_{s=\max(j,n)}^\infty|\Delta a_s|$.
Similarly,
\[
n^{-lp}\bigl\|S^{(l)}_{n}(f)\bigr\|_p^p
\ll
n^{-lp}
\biggl\|\sum_{\nu=1}^{n} \nu^l a_\nu \cos \nu x\biggr\|_p^p
\ll
n^{-lp}
\sum_{\nu=1}^{n} \nu^{p-2}\biggl(\sum_{s=\nu}^n|\Delta (s^l a_s)| \biggr)^p.
\]
Further,
\[
\sum_{s=\nu}^n |\Delta (s^l a_s)|
\ll
\sum_{s=\nu}^n s^{l-1} a_s + \sum_{s=\nu}^n s^l |\Delta a_s|
\ll
\sum_{s=\nu}^n s^{l-1} a_s +
\sum_{s=\nu}^n |\Delta a_s|\biggl( \sum_{m=\nu}^{s}{m^{l-1}}+\nu^l \biggr),
\]
and after routine calculations, we arrive at
\[
\sum_{s=\nu}^n |\Delta (s^l a_s)|
\ll
\sum_{s=[\nu/c]}^n s^{l-1}a_s + n^l\sum_{m=n}^\infty\frac{a_m}{m}.
\]
Using this and Hardy's inequality, we get
$n^{-lp}\bigl\|S^{(l)}_{n}(f)\bigr\|_p^p\ll I_1+I_2$. Finally, by \eqref{real},
\[
\omega_l^p\Bigl(f,\frac{1}{n}\Bigr)_p\ll I_1+I_2.
\tag*{\qed}
\]
\def\qed{\relax}\end{proof}

\section{Discussion and applications}

\subsection{Riemann--Lebesgue-type results}

From Theorem A and \cite[Th.~2.2]{Di}, one has the following estimate of the
Fourier transform
\begin{equation}\label{oo}
t^{\theta l}\left(\int_{|\xi|<1/t} |\xi|^{\theta lp'} |\wh{f}(\xi)|^{p'} \,d\xi\right)^{1/p'}
+
\left(\int_{1/t\le |\xi|} |\wh{f}(\xi)|^{p'} \,d\xi\right)^{1/p'}
\ll
 \Omega_l(f, t)_p, \qquad 1< p \le 2.
\end{equation}
On the other hand, Theorem \ref{t1} gives ($p\le q\le p'$, $1< p \le 2$)
\begin{equation}\label{ooo}
t^{\theta l}\left(\int_{|\xi|<1/t} |\xi|^{\theta lq+ dq(1-1/p-1/q)}
|\wh{f}(\xi)|^{q} \,d\xi\right)^{1/q} + \left( \int_{1/t\le |\xi|}
|\xi|^{dq(1-1/p-1/q)} |\wh{f}(\xi)|^{q}\,d\xi \right)^{1/q}\ll \Omega_l(f,t)_p.
\end{equation}
If $q=p'$ \eqref{ooo} reduces to \eqref{oo}. The following example shows that
\eqref{ooo}, in general, provides better estimates than \eqref{oo}.

\begin{example*}
Let $\wh{f}(\xi)=F_{0}(|\xi|)$,
\[
F_{0}(s)=\frac{s^{-d/p'}}{\ln^{2/p}(2+s)},\qquad \frac{2d}{d+1}<p<\infty.
\]
Note that $F_{0}$ is decreasing to zero and therefore $F_{0}\in GM$. Also, it is easy to see that
$|\xi|^{d(1-2/p)} \wh{f}(\xi)\in L^{p}(\R^{d})$.
Hence, as in Example \ref{exa3-GM} (for $q=p$) we get $f\in \wh{GM}{}^{d}\cap L^{p}(\R^{d})$.

We have
\begin{multline*}
t^{\theta l}\left(\int_{|\xi|<1/t} |\xi|^{\theta lq+ dq(1-1/p-1/q)}
|\wh{f}(\xi)|^{q} \,d\xi\right)^{1/q} + \left(\int_{|\xi|\ge
1/t}|\xi|^{dq(1-1/p-1/q)}|\wh{f}(\xi)|^{q}\,d\xi \right)^{1/q}
\\
\asymp
\bigl[\ln(2+1/t)\bigr]^{-2/p+1/q}.
\end{multline*}
Then \eqref{oo} gives
\[
\bigl[\ln(2+1/t)\bigr]^{1-3/p}\ll \Omega_l(f, t)_p,\qquad p\le 2,
\]
and \eqref{ooo} implies (with $q=p$)
\[
\bigl[\ln(2+1/t)\bigr]^{-1/p}\ll \Omega_l(f, t)_p,\qquad p\le 2.
\]
The latter estimate is stronger. Moreover, it is sharp since by Corollary \ref{eqiv} we in fact have
\begin{equation*}
\bigl[\ln(2+1/t)\bigr]^{-1/p}\asymp \Omega_l(f, t)_p,\qquad \frac{2d}{d+1}<p<\infty.
\tag*{\qed}
\end{equation*}
\end{example*}

\subsection{Pointwise Riemann--Lebesgue-type results}
\label{RLTR}

For $f\in L^1(\R^{d})\cap L^p(\R^{d})$, $1<p\le 2$, the Riemann--Lebesgue
inequality
\begin{equation}\label{fc-1}
|\wh{f}(\xi)|\ll \Omega_l(f, 1/|\xi|)_p
\end{equation}
does not hold in general.

Let us consider the case of $d=1$ and $l\ge 2$. Define
\[
f(x)=\sum_{n\in \Z}a_{n}\psi_{n}(x),\qquad
\psi_{n}(x)=\varepsilon_{n}\varphi(\varepsilon_{n}x)e^{-inx},
\]
\[
\varphi(x)=(2\pi)^{-1}\left(\frac{\sin(x/2)}{x/2}\right)^{2},\qquad
\wh{\varphi}(\xi)=(1-|\xi|)_{+},
\]
\[
a_{n}=(1+|n|)^{-3/2},\qquad \varepsilon_{n}=(1+|n|)^{-\alpha p'},\quad
1<\alpha<3/2.
\]
Changing variables, we have
\[
\|\varphi(\varepsilon_{n}x)\|_{q}=\varepsilon_{n}^{-1/q}\|\varphi\|_{q}\asymp
\varepsilon_{n}^{-1/q}.
\]
Hence
\begin{equation}\label{fqq}
\|f\|_{q}\le \sum_{n\in \Z}a_{n}\varepsilon_{n}\|\varphi(\varepsilon_{n}x)\|_{q}\asymp
\sum_{n\in \Z}a_{n}\varepsilon_{n}^{1/q'}\le
\sum_{n\in \Z}(1+|n|)^{-3/2}<\infty,\qquad q\ge 1.
\end{equation}
This implies $f\in L^{1}(\R)\cap L^{p}(\R)$. The Fourier transform of $f$ is written as
\[
\wh{f}(\xi)=\sum_{n\in \Z}a_{n}\wh{\psi}_{n}(\xi),\qquad
\wh{\psi}_{n}(\xi)=\wh{\varphi}\left(\frac{\xi-n}{\varepsilon_{n}}\right).
\]

Let us estimate $\omega_{l}(f,1/t)_{p}$ from above. We will use the realization
result (see \eqref{omega2}) given by
\begin{equation}\label{fqqqq}
\omega_{l}(f,1/t)_{p}\asymp
\left\|f-S_{t}(f)\right\|_{p}+t^{-l}\bigl\|S_{t}^{(l)}(f)\bigr\|_{p},\qquad
t>0,\quad 1<p<\infty.
\end{equation}
Since $\supp\wh{\psi}_{n}\subset [n-\varepsilon_{n},n+\varepsilon_{n}]$,
then
\[
S_{t}(f)(x)=\sum_{|n|\le [t]}a_{n}\psi_{n}(x),\qquad
f(x)-S_{t}(f)(x)=\sum_{|n|>[t]}a_{n}\psi_{n}(x).
\]
The function $\varphi$ and its derivatives are given by
\[
\varphi^{(l)}(x)=\frac{1}{2\pi}\int_{-1}^{1}(1-|\xi|)(-i\xi)^{l}e^{-i\xi
x}\,d\xi,\quad l\in \Z_{+}.
\]
Then $|\varphi^{(l)}(x)|\le 1$, $x\in \R$. For $|x|\ge 1$ we get
\[
\bigl|\varphi^{(l)}(x)\bigr|=
\biggl|(2\pi)^{-1}\sum_{j=0}^{l}\binom{l}{j}\left[\sin^{2}(x/2)\right]^{(l-j)}
\left[(x/2)^{-2}\right]^{(j)}\biggr|\ll
\frac{1}{x^{2}}.
\]
Thus, $\left|\varphi^{(l)}(x)\right|\ll \left(1+x^2\right)^{-1}$ and then
\begin{align*}
\bigl|\psi_{n}^{(l)}(x)\bigr|&=\varepsilon_{n}
\biggl|\sum_{j=0}^{l}\binom{l}{j}\left[e^{-inx}\right]^{(l-j)}
\left[\varphi(\varepsilon_{n}x)\right]^{(j)}\biggr|=
\varepsilon_{n}
\biggl|\sum_{j=0}^{l}\binom{l}{j}(-in)^{l-j}
\varepsilon_{n}^{j}\varphi^{(j)}(\varepsilon_{n}x)\biggr|
\\
&\ll
\frac{\varepsilon_{n}}{1+(\varepsilon_{n}x)^{2}}
\sum_{j=0}^{l}\binom{l}{j}|n|^{l-j}\varepsilon_{n}^{j}=
\frac{\varepsilon_{n}(|n|+\varepsilon_{n})^{l}}{1+(\varepsilon_{n}x)^{2}}.
\end{align*}
Then we arrive at
\[
\|\psi_{n}^{(l)}\|_{q}\ll (|n|+\varepsilon_{n})^{l}\varepsilon_{n}^{1/q'}\ll
(1+|n|)^{l}\varepsilon_{n}^{1/q'}.
\]
Using these relations and proceeding similarly to \eqref{fqq}, we get
\[
\left\|f-S_{t}(f)\right\|_{p}\le \sum_{|n|>[t]}a_{n}\varepsilon_{n}^{1/p'}=
\sum_{|n|>[t]}(1+|n|)^{-3/2-\alpha}\ll t^{-1/2-\alpha},
\]
\[
t^{-l}\bigl\|S_{t}^{(l)}(f)\bigr\|_{p}\ll
t^{-l}\sum_{|n|\le [t]}a_{n}(1+|n|)^{l}\varepsilon_{n}^{1/p'}\ll
t^{-l}\sum_{|n|\le [t]}(1+|n|)^{l-3/2-\alpha}.
\]
For $l\ge 2$, $\alpha<3/2$ we get $l-3/2-\alpha>-1$. Then
$t^{-l}\bigl\|S_{t}^{(l)}(f)\bigr\|_{p}\ll t^{-1/2-\alpha}$. Finally,
\eqref{fqqqq} implies
\[
\omega_{l}(f,1/t)_{p}\ll t^{-1/2-\alpha}.
\]
For any large enough $t\in \N$
\[
\wh{f}(t)=a_{t}\wh{\varphi}(0)=a_{t}\asymp t^{-3/2}.
\]
Since $\varepsilon=\alpha-1 >0$, we finally get
\[
\wh{f}(t)\gg t^\varepsilon \omega_{l}(f,1/t)_{p}. \tag*{\qed}
\]

However, let us remark that for functions from the class $\wh{GM}{}^{d}$ class, it is possible to obtain the pointwise bound of the Fourier transform.
\begin{corollary}\label{typaleb}
Let $f\in \wh{GM}{}^{d}\cap L^{p}(\R^{d})$, $d\ge 1$, $\wh{f}(\xi)=F_0(|\xi|)\ge 0$, and $\frac{2d}{d+1}<p<\infty$. Then
\begin{equation}\label{bes6}
F_0(t) \ll t^{-d/p'} \Omega_l(f,1/t)_p.
\end{equation}
\end{corollary}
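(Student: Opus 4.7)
The plan is to combine the equivalence in Corollary \ref{eqiv} with the intrinsic regularity encoded by the $GM$-condition on $F_0$. Setting $\tau=1/t$ in Corollary \ref{eqiv} and passing to polar coordinates gives
\[
\int_0^\infty \min(1,r/t)^{\theta l p}\,r^{dp-d-1}\,F_0(r)^p\,dr \asymp \Omega_l(f,1/t)_p^p,
\]
and since $\min(1,r/t)$ is bounded below by a positive constant on $[t/(2c),\infty)$, this immediately implies
\[
\int_{t/(2c)}^\infty r^{dp-d-1}\,F_0(r)^p\,dr \ll \Omega_l(f,1/t)_p^p.
\]

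The next step is to produce a pointwise averaging estimate for $F_0(t)$. For $u\in[t/2,t]$, we have $F_0(t)\le F_0(u)+\int_{t/2}^\infty|dF_0|$; averaging in $u$ over $[t/2,t]$ and then applying the $GM$-condition \eqref{GMcond-usi} at $z=t/2$ yields
\[
F_0(t) \ll \frac{1}{t}\int_{t/2}^t F_0(u)\,du + \int_{t/(2c)}^\infty \frac{F_0(u)}{u}\,du.
\]

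It remains to bound each term by $t^{-d/p'}\Omega_l(f,1/t)_p$. For the tail, H\"older's inequality with the factorisation $u^{-1}=u^{(dp-d-1)/p}\cdot u^{-(d+1)/p'}$ gives
\[
\int_{t/(2c)}^\infty \frac{F_0(u)}{u}\,du \le \left(\int_{t/(2c)}^\infty F_0(u)^p u^{dp-d-1}\,du\right)^{1/p}\left(\int_{t/(2c)}^\infty u^{-(d+1)}\,du\right)^{1/p'},
\]
and the preceding display together with $\int_{t/(2c)}^\infty u^{-(d+1)}du\asymp t^{-d}$ produces the required bound. For the local term, since $u^{dp-d-1}\asymp t^{dp-d-1}$ on $[t/2,t]$, H\"older's inequality combined with the preceding display yields $\int_{t/2}^t F_0(u)\,du\ll t^{1-d/p'}\Omega_l(f,1/t)_p$, and dividing by $t$ gives the same bound.

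The principal difficulty is the second step: translating the integral-type $GM$-regularity into a genuine pointwise estimate for $F_0(t)$. Once this averaging bound is available, everything else reduces to careful bookkeeping of exponents via H\"older's inequality, with $1/p'+(d+1-dp)/p$ collapsing to $1-d/p'$ by direct computation.
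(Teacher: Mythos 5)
Your proposal is correct and follows essentially the same route as the paper: a pointwise bound for $F_0(t)$ by the tail integral $\int_{t/(2c)}^\infty F_0(u)u^{-1}\,du$ coming from the $GM$-condition, then H\"older's inequality with the same weight factorization, and finally Corollary \ref{eqiv} to control the resulting integral by $\Omega_l(f,1/t)_p$. The only difference is cosmetic: you re-derive the pointwise consequence of the $GM$-condition by averaging over $[t/2,t]$, whereas the paper simply invokes the already-stated estimate \eqref{estv}, and you treat the local average term separately where the paper absorbs it into that estimate.
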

\begin{proof}
Since $f\in \wh{GM}{}^{d}$, using \eqref{estv} and H\"{o}lder's inequality, we get
\begin{align}\label{bes4}
F_0(s) &\ll
\int_{s/c}^\infty \frac{F_0(u)}{u} \,du =
\int_{s/c}^\infty {F_0(u)} {u}^{d-d/p-1/p} {u}^{-d+(d+1-p)/p} \,du
\notag
\\
&\ll
s^{d/p-d}\left(\int_{s/c}^\infty {F_0^p(u)} {u}^{dp-d-1} \,du \right)^{1/p}.
\end{align}
Then using Corollary \ref{eqiv}, we have
\begin{equation}\label{bes2}
\Omega_l^p(f,t)_p
\asymp
t^{\theta l p} \int_{0}^{1/t} s^{\theta l p +dp-d-1} F_{0}^p(s) \,ds
+
\int^{\infty}_{1/t} s^{dp-d-1} F_{0}^p(s) \,ds
\end{equation}
and by \eqref{bes4}, we finally get
\[
F_0(t)\ll
t^{d/p-d} \left(\int_{t/c}^\infty {F_0^p(u)} {u}^{dp-d-1} \,du\right)^{1/p} \ll
t^{-d/p'} \Omega_l(f, 1/t)_p.
\tag*{\qed}
\]
\def\qed{\relax}\end{proof}

\subsection{Moduli of smoothness and best approximations: sharp relations}

The following direct and inverse theorems of trigonometric approximation are
well known (see e.g. \cite[p.~210]{DeVL}, \cite[Intr.]{DDT}):
\begin{equation}\label{a3}
\frac{1}{n^l}\left( \sum_{\nu=0}^n (\nu+1)^{\tau l-1}
E^\tau_\nu(f)_p \right)^{1/\tau} \ll \omega_{l}
\Bigl(f,\frac{1}{n}\Bigr)_p \ll \frac{1}{n^l}\left(
\sum_{\nu=0}^n (\nu+1)^{ql-1} E^q_\nu(f)_p
\right)^{1/q},
\end{equation}
where $f\in L^p(\mathbb{T})$, $1<p<\infty$, $l, n\in \mathbf{N}$,
$q=\min(2,p),$ $\tau=\max(2,p)$, $E_n(f)_p$ denotes the $n$-th best
trigonometric approximation of $f$ in $L^p$, and $\omega_l(f,\delta)_p$ is the
$L^p$-modulus of smoothness, see \eqref{mod} with $X=\T$.

We remark that \eqref{a3} is the sharp version of classical Jackson
and weak-type inequalities (\cite[p. 205, 208]{DeVL}) and it can be
written equivalently as follows (\cite{DDT}):
\begin{equation}\label{a-5}
t^l\left( \int_t^1 u^{-\tau l-1} \omega_{l+1}^\tau (f,
u)_p \,du \right)^{1/\tau} \ll \omega_{l} (f,
t)_p \ll {t^l}\left( \int_t^1 u^{-ql-1}
\omega_{l+1}^q (f, u)_p \,du \right)^{1/q}.
\end{equation}
Constructing individual functions shows (\cite{DDT}) that the parameters
$q=\min(2,p)$ and $\tau=\max(2,p)$ are optimal in \eqref{a3} and \eqref{a-5}.
For functions on $[-1,1]$ inequalities of type (\ref{a3}) and (\ref{a-5}) were obtained in \cite{totik, DDT}.

For functions on ${L^{p}(\R^{d})}$, similar results were also proved for
$\Omega_{k}(f,t)_p$ and $E_n(f)_p$, i.e., the best $L^p$-approximation by
functions of exponential type $n$ (see \cite{DDT}). For example, an analogue of
\eqref{a3} is given by
\[
\frac{1}{2^{\theta ln}}
\left(\sum_{\nu=0}^n
{2^{\theta l\tau \nu}} E^\tau_{2^\nu}(f)_p \right)^{1/\tau}
\ll \Omega_{l} \Bigl(f, \frac{1}{2^n} \Bigr)_p
\ll
\frac{1}{2^{\theta l n}}
\left(\sum_{\nu=0}^n
{2^{\theta l q \nu}} E^q_{2^\nu}(f)_p \right)^{1/q},\qquad
\|\cdot\|_p=\|\cdot\|_{L^{p}(\R^{d})}.
\]
Below we show that for functions from the class $\wh{GM}{}^{d}$ we can completely solve
the problem of description of relationships between $\Omega_l(f,t)_p$ and
$E_n(f)_p$ as well as $\Omega_l(f,t)_p$ and $\Omega_{l+1}(f,t)_p$.

\begin{theorem}\label{Marchaud}
If $f\in \wh{GM}{}^{d}\cap L^{p}(\R^{d})$, $d\ge 1$, $\wh{f}\ge 0$, and
$\frac{2d}{d+1}<p<\infty$, then
\begin{equation}\label{eqeq}
\Omega_l(f,t)_p \asymp \left( t^{\theta l p} \int_{t}^1 u^{-\theta l p}
\Omega_{l+1}^p(f,u)_p \frac{du}{u} \right)^{1/p} + t^{\theta l}A(f)_p,\qquad
0<t<\frac12,
\end{equation}
where $A(f)_{p}:=\bigl\||\xi|^{\theta l+d(1-2/p)}\chi_{1}(|\xi|)\wh{f}(\xi)\bigr\|_p\ll \Omega_l(f,1)_p$.
 In particular, we have
\[
\left(
t^{\theta l p}
\int_{t}^1 u^{-\theta l p} \Omega_{l+1}^p(f,u)_p \frac{du}{u} \right)^{1/p}
\ll
\Omega_l(f,t)_p
\ll
\left(
t^{\theta l p}
\int_{t}^1 u^{-\theta l p} \Omega_{l+1}^p(f,u)_p \frac{du}{u} \right)^{1/p} +
t^{\theta l} \|f\|_p
\]
and
\begin{equation}\label{a-vsp}
\frac{1}{2^{\theta l n}}
\left(\sum_{\nu=0}^n
{2^{\theta l p \nu}} E^p_{2^\nu}(f)_p \right)^{1/p}
\ll
\Omega_{l} \Bigl(f, \frac{1}{2^n} \Bigr)_p
\ll
\frac{1}{2^{\theta ln}}
\left(\sum_{\nu=0}^n
{2^{\theta l p \nu}} E^p_{2^\nu}(f)_p \right)^{1/p}
+ \frac{1}{2^{\theta ln}}\| f\|_p.
\end{equation}
\end{theorem}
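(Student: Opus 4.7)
The plan is to reduce every quantity in the statement to a weighted one-dimensional integral of the radial component $F_{0}$ of $\wh{f}$ via Corollary \ref{eqiv}, and then verify \eqref{eqeq} by a direct Fubini computation. Under the hypotheses $f\in \wh{GM}{}^{d}\cap L^{p}$, $\wh{f}\ge 0$, $p>2d/(d+1)$, Corollary \ref{eqiv} applies to both $\Omega_{l}(f,t)_{p}$ and $\Omega_{l+1}(f,u)_{p}$; passing to polar coordinates and setting $\Phi(s):=s^{dp-d-1}F_{0}(s)^{p}$, we obtain
\[
\Omega_{l}(f,t)_{p}^{p}\asymp t^{\theta lp}\int_{0}^{1/t}s^{\theta lp}\Phi(s)\,ds+\int_{1/t}^{\infty}\Phi(s)\,ds,
\]
and the analogous identity for $l+1$. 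In particular $A(f)_{p}^{p}\asymp\int_{0}^{1}s^{\theta lp}\Phi(s)\,ds$. The bound $A(f)_{p}\ll \Omega_{l}(f,1)_{p}$ then follows from Theorem \ref{t3}(A) with $q=p$, $t=1$ (using $\min(1,|\xi|)=|\xi|$ on $|\xi|\le 1$), while $A(f)_{p}\ll \|f\|_{p}$ follows from the Pitt equivalence \eqref{aseqhff}.

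To prove \eqref{eqeq}, I would substitute the integral representation of $\Omega_{l+1}^{p}(f,u)_{p}$ into $t^{\theta lp}\int_{t}^{1}u^{-\theta lp-1}\Omega_{l+1}^{p}(f,u)_{p}\,du$ and swap the order of integration. Splitting the inner $s$-integral according to the three regimes $s\in(0,1]$, $s\in(1,1/t]$, $s\ge 1/t$ and evaluating the resulting one-dimensional integrals $\int u^{\alpha-1}\,du$ (with exponent $\alpha=\theta p$ coming from the "low-frequency" part of $\Omega_{l+1}^{p}$ and $\alpha=-\theta lp$ from the "high-frequency" part), one obtains after elementary bookkeeping
\[
t^{\theta lp}\int_{t}^{1}u^{-\theta lp-1}\Omega_{l+1}^{p}(f,u)_{p}\,du\asymp t^{\theta lp}\int_{0}^{1}s^{\theta(l+1)p}\Phi(s)\,ds+t^{\theta lp}\int_{1}^{1/t}s^{\theta lp}\Phi(s)\,ds+\int_{1/t}^{\infty}\Phi(s)\,ds.
\]
Comparing with the formula for $\Omega_{l}^{p}(f,t)_{p}$, the only piece absent from the right-hand side is $t^{\theta lp}\int_{0}^{1}s^{\theta lp}\Phi(s)\,ds\asymp t^{\theta lp}A(f)_{p}^{p}$; since $s^{\theta(l+1)p}\le s^{\theta lp}$ on $(0,1]$, the first summand above is absorbed by $t^{\theta lp}A(f)_{p}^{p}$, giving \eqref{eqeq}. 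The two subsequent displayed inequalities follow at once by replacing $A(f)_{p}$ on the right by $\|f\|_{p}$ and, on the left, by noting that $t^{\theta lp}A(f)_{p}^{p}$ is already dominated by the displayed integral representation of $\Omega_{l}^{p}$.

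For the discrete analogue \eqref{a-vsp} I would first apply the same Pitt-type reasoning as in the proof of Theorem \ref{t3} to the $GM$-function $[1-\eta_{l,b}(t|\xi|)]F_{0}(|\xi|)$ (Lemma \ref{lemGMcond}); this gives $\|f-R_{\lambda,l,b}(f)\|_{p}^{p}\asymp t^{\theta lp}\int_{0}^{1/t}s^{\theta lp}\Phi(s)\,ds+\int_{1/t}^{\infty}\Phi(s)\,ds$, from which the realization \eqref{v} together with Jackson's inequality yields $E_{n}^{p}(f)_{p}\asymp \int_{n}^{\infty}\Phi(s)\,ds$ up to a low-frequency remainder of size at most $n^{-\theta lp}\|f\|_{p}^{p}$. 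Specialising \eqref{eqeq} to $t=2^{-n}$, discretising $u\in[2^{-\nu-1},2^{-\nu}]$ and swapping sum with integral in the $\Phi$-representation shows that $2^{-\theta lpn}\sum_{\nu=0}^{n}2^{\theta lp\nu}E_{2^{\nu}}^{p}(f)_{p}\asymp 2^{-\theta lpn}\int_{1}^{2^{n}}s^{\theta lp}\Phi(s)\,ds+\int_{2^{n}}^{\infty}\Phi(s)\,ds$, which differs from the integral representation of $\Omega_{l}^{p}(f,2^{-n})_{p}$ only by the same low-frequency term $\le 2^{-\theta lpn}\|f\|_{p}^{p}$; this yields \eqref{a-vsp}.

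The main obstacle I expect is the Fubini bookkeeping in the middle step: one has to keep track of the three regimes in $s$ together with the sign of the exponent of $u$, so that the correct asymptotic behaviour is captured in every regime without losing either the upper or the lower bound. Once this is done, the equivalence \eqref{eqeq} reduces to the purely algebraic observation that $s^{\theta(l+1)p}\le s^{\theta lp}$ for $s\le 1$, so that the low-frequency piece missing from the integral side of \eqref{eqeq} is exactly $t^{\theta lp}A(f)_{p}^{p}$.
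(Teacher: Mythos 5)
Your treatment of \eqref{eqeq} and of the middle display is correct and is essentially the paper's argument: reduce everything to the weighted integrals of $F_0$ via Corollary \ref{eqiv}, i.e.\ $\Omega_l^p(f,t)_p\asymp t^{\theta lp}\int_0^{1/t}s^{\theta lp}\Phi+\int_{1/t}^\infty\Phi$ with $\Phi(s)=s^{dp-d-1}F_0^p(s)$, and compare with the Marchaud integral by Fubini/Hardy bookkeeping; your observation that the only mismatch is the low-frequency piece $t^{\theta lp}\int_0^1 s^{\theta lp}\Phi\asymp t^{\theta lp}A^p(f)_p$, absorbed because $s^{\theta(l+1)p}\le s^{\theta lp}$ on $(0,1]$, is exactly the content of the paper's estimates of $I_1,I_2$ against $J_1,J_2$, and $A(f)_p\ll\Omega_l(f,1)_p$, $A(f)_p\ll\|f\|_p$ via \eqref{aseqhff} are as in the paper.

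The derivation of \eqref{a-vsp}, however, has a genuine gap. You claim $E^p_{2^\nu}(f)_p\asymp\int_{2^\nu}^\infty\Phi$ up to a remainder $\ll 2^{-\theta lp\nu}\|f\|_p^p$, but neither half of this is justified. For the upper bound, the order-$l$ Riesz realization only gives $E^p_{2^\nu}\ll 2^{-\theta lp\nu}\int_0^{2^\nu}s^{\theta lp}\Phi+\int_{2^\nu}^\infty\Phi$, and the low-frequency term here is only $\ll\|f\|_p^p$, not $\ll 2^{-\theta lp\nu}\|f\|_p^p$; moreover, even your claimed remainder, summed against the weights $2^{\theta lp\nu}$, contributes $(n+1)\,2^{-\theta lpn}\|f\|_p^p$, i.e.\ an extra factor $n$ which is not allowed on either side of \eqref{a-vsp} (test $f(x)=j_{d/2}(|x|)$, where $\Omega_l(f,2^{-n})_p\asymp 2^{-\theta ln}$). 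This is repaired by using the Jackson bound of order $l+1$, $E^p_{2^\nu}\ll\Omega^p_{l+1}(f,2^{-\nu})_p\asymp 2^{-\theta(l+1)p\nu}\int_0^{2^\nu}s^{\theta(l+1)p}\Phi+\int_{2^\nu}^\infty\Phi$, whose low-frequency parts sum geometrically. For the lower bound, $E^p_{2^\nu}\gg\int_{c2^\nu}^\infty\Phi$ (needed for the right-hand inequality of \eqref{a-vsp} along your route) requires applying the Pitt lower bound \eqref{aseqhff} to $f$ minus a near-best approximant, and the GM property of the correspondingly truncated radial Fourier transform is not automatic: Lemma \ref{lemGMcond} does not cover multipliers vanishing on an initial interval, so this step needs a separate argument. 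The paper avoids both issues by deducing \eqref{a-vsp} directly from the already established inequality with $t^{\theta l}\|f\|_p$ via the standard equivalence between $t^{\theta lp}\int_t^1 u^{-\theta lp}\Omega^p_{l+1}(f,u)_p\,du/u$ and $2^{-\theta lpn}\sum_{\nu=0}^n 2^{\theta lp\nu}E^p_{2^\nu}(f)_p$ (inequalities (5.7)--(5.8) of \cite{DDT}), which uses only Jackson's inequality of order $l+1$ and a Bernstein--Hardy argument and requires no lower bound on $E_{2^\nu}(f)_p$ at all; you should either cite that equivalence or prove it, rather than the per-term asymptotics for $E_{2^\nu}$.
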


\begin{remark}\label{remarkremark}
In \eqref{eqeq} one cannot drop $t^{\theta l}A(f)_p$. Indeed, consider
\[
F_{0}^p(s)= s^{-( dp-d-1)} \chi_{1/n}(s).
\]
Then
\[
\Omega_l^p(f,t)_p\asymp t^{\theta l p}\int_0^{1/n} s^{\theta l p + dp-d-1}
F_{0}^p(s) \,ds \asymp t^{\theta l p} \int_{0}^{1/n} s^{\theta l p} \,ds \asymp
t^{\theta l p} n^{-\theta l p-1}.
\]
Using this,
\[
t^{\theta l p}
\int_{t}^1 u^{-\theta l p} \Omega_{l+1}^p(f,u)_p \frac{du}{u}
\asymp
t^{\theta l p}
\int_{t}^1
u^{\theta p} n^{-\theta (l+1) p-1}\frac{du}{u}
\asymp
t^{\theta l p} n^{-\theta (l+1) p-1}.
\]
Hence, writing
\[
t^{\theta l } n^{-\theta l-1/p}
\ll
\Omega_l(f,t)_p
\ll
t^{\theta l }
\left(
\int_{t}^1 u^{-\theta l p} \Omega_{l+1}^p(f,u)_p \frac{du}{u} \right)^{1/p}
\ll
n^{-\theta }
t^{\theta l} n^{-\theta l-1/p}
\]
we arrive at a contradiction as $n\to \infty$.
\end{remark}

\begin{proof}[Proof of Theorem \ref{Marchaud}]
Using Corollary \ref{eqiv}, we get
\[
\Omega_l^p(f,t)_p
\asymp
t^{\theta l p}\int_{0}^{1/t} s^{\theta l p +dp-d-1} F_{0}^p(s) \,ds
+
\int^{\infty}_{1/t} s^{dp-d-1} F_{0}^p(s) \,ds =:J_1(t)+J_2(t)
\]
and
\begin{multline*}
t^{\theta l p}
\int_{t}^1 u^{-\theta l p} \Omega_{l+1}^p(f,u)_p \frac{du}{u}
\asymp
t^{\theta l p}
\int_{1}^{1/t} u^{-\theta p-1}\left[\int_0^u s^{\theta (l+1) p + dp-d-1} F_{0}^p(s) \,ds\right] du
\\
{}+
t^{\theta l p}
\int_{1}^{1/t} u^{\theta l p-1} \left[\int_u^\infty s^{dp-d-1} F_{0}^p(s) \,ds\right] du=:
I_1(t)+I_2(t).
\end{multline*}
Then

\begin{align*}
I_1(t)
&=
t^{\theta l p}
\int_{1}^{1/t} u^{-\theta p-1}\left[ \left(\int_0^1+\int_1^u\right) s^{\theta (l+1) p + dp-d-1} F_{0}^p(s) \,ds\right]\, {du}
\\
&\asymp
t^{\theta l p}
\int_0^1 s^{\theta (l+1) p + dp-d-1} F_{0}^p(s) \,ds
+
t^{\theta l p} \int_{1}^{1/t} s^{\theta (l+1) p + dp-d-1} F_{0}^p(s)
\int_{s}^{1/t} u^{-\theta p-1} \,du\, {ds}
\\
&\ll J_1(t)
\end{align*}
and
\begin{align*}
I_2(t)
&=
t^{\theta l p}
\int_{1}^{1/t} u^{\theta l p-1}\left[ \left(\int_u^{1/t}+\int_{1/t}^\infty \right) s^{ dp-d-1} F_{0}^p(s) \,ds\right]\, {du}
\\
&\asymp
t^{\theta l p} \int_{1}^{1/t} s^{dp-d-1} F_{0}^p(s)
\int^{s}_{1} u^{ \theta l p-1} \,du \,ds
+
 \int_{1/t}^\infty s^{dp-d-1} F_{0}^p(s) \,ds
\\
&\ll J_1(t)+J_2(t).
\end{align*}
Using again Corollary \ref{eqiv},
\begin{align*}
A(f)_{p}&\asymp \left(\int_0^1 s^{\theta lp + dp-d-1} F_{0}^p(s) \,ds\right)^{1/p}\ll
\left(\int_0^\infty s^{dp-d-1}\min(1,s)^{\theta lp} F_{0}^p(s)
\,ds\right)^{1/p}
\\
&\asymp \left\|\min(1,|\xi|)^{\theta l}|\xi|^{d(1-2/p)}\wh{f}(\xi)\right\|_p\asymp
\Omega_l(f,1)_p.
\end{align*}
Moreover, $A^{p}(f)_{p}\ll J_1(t).$ Thus,
\[
I_1(t)+I_2(t)+t^{\theta l p}A^{p}(f)_{p}\ll J_1(t)+J_2(t).
\]

To prove the inverse inequality, we first remark that $s^{-\theta
p}\ll \int_s^{1/t} u^{-\theta p -1} du,$ $1<s<1/(2t)$ and therefore using \eqref{aseqhff},
\begin{align*}
J_1(2t)&\ll
t^{\theta l p}\int_{0}^{1} s^{\theta l p +dp-d-1} F_{0}^p(s) \,ds
+
t^{\theta l p}\int_{1}^{1/2t} s^{\theta (l+1)p+ dp-d-1} F_{0}^p(s)
\left(\int_s^{1/t} u^{-\theta p-1} \,du\right)ds
\\
&\ll
t^{\theta lp}A^{p}(f)_{p}+I_1(t).
\end{align*}
Also,
\begin{align*}
J_2(2t)
&\ll
\int_{1/(2t)}^\infty s^{dp-d-1} F_{0}^p(s)\,ds
\\
&\ll
\int_{1/(2t)}^{1/t} s^{dp-d-1} F_{0}^p(s)\,ds
+
t^{\theta l p}\int_{1/(2t)}^{1/t}
u^{\theta l p-1}
\int_{u}^\infty s^{dp-d-1} F_{0}^p(s)\,ds\,du
\\
&\ll
I_1(t)+I_2(t).
\end{align*}
Finally, to verify \eqref{a-vsp}, we apply \cite[(5.7) and (5.8)]{DDT}.
\end{proof}

Using \eqref{pl}, we state the analogous result for periodic functions; compare with \eqref{a3} and \eqref{a-5}.
\begin{theorem}
Let $f\in L^p(\mathbb{T})$, $1< p<\infty$, and
\[
f(x)\sim \sum_{n=1}^\infty (a_n \cos nx + b_n \sin nx),
\]
where non-negative $\{a_n\}_{n\in \N}$, $\{b_n\}_{n\in \N}$ are general monotone sequences. Then
\[
\omega_l(f,t)_p
\asymp
\left(
t^{l p}
\int_{t}^1 u^{-l p} \omega_{l+1}^p(f,u)_p \frac{du}{u} \right)^{1/p},\qquad 0<t<\frac12.
\]
In particular,
\[
\omega_l(f,1/n)_p
\asymp
\left(
n^{-l p}
\sum_{\nu=0}^{n} (\nu+1)^{l p-1} E_\nu^p(f)_p \right)^{1/p},
\]
where $E_\nu(f)_p$ is the best $L^p$-approximation of $f$ by trigonometric
polynomials of degree $\nu$.
\end{theorem}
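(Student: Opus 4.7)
The strategy is to reduce everything to the discrete characterization of the modulus of smoothness already established in Theorem \ref{t4}, and then verify the claimed equivalences by direct manipulation of sums and integrals. Set $c_\nu^p:=a_\nu^p+b_\nu^p$. By \eqref{pl},
\[
\omega_l^p(f,t)_p
\asymp
t^{lp}\!\!\sum_{1\le\nu\le 1/t}\!\!\nu^{lp+p-2}c_\nu^p
+\sum_{\nu>1/t}\nu^{p-2}c_\nu^p,
\]
and the same identity holds with $l$ replaced by $l+1$ for $\omega_{l+1}^p(f,u)_p$.

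For the first claimed equivalence, substitute the latter expression into the right-hand side of
\[
t^{lp}\int_t^1 u^{-lp}\,\omega_{l+1}^p(f,u)_p\,\frac{du}{u},
\]
and change the order of summation and integration. The ``head'' contribution yields
\[
t^{lp}\sum_{1\le\nu\le 1/t}\nu^{(l+1)p+p-2}c_\nu^p\int_t^{1/\nu}u^{p-1}\,du
\asymp
t^{lp}\sum_{1\le\nu\le 1/t}\nu^{lp+p-2}c_\nu^p,
\]
while the ``tail'' contribution splits according to whether $\nu\le 1/t$ or $\nu>1/t$, giving
\[
t^{lp}\sum_{1\le\nu\le 1/t}\nu^{p-2}c_\nu^p\int_{1/\nu}^{1}u^{-lp-1}\,du
+t^{lp}\sum_{\nu>1/t}\nu^{p-2}c_\nu^p\int_{t}^{1}u^{-lp-1}\,du,
\]
which is $\asymp t^{lp}\sum_{\nu\le 1/t}\nu^{lp+p-2}c_\nu^p+\sum_{\nu>1/t}\nu^{p-2}c_\nu^p$. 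Summing the two pieces recovers precisely the discrete characterization of $\omega_l^p(f,t)_p$, completing the two-sided inequality.

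For the ``in particular'' statement I would first establish, using Lemma \ref{lem} and the GM hypothesis, the equivalence
\[
E_\nu^p(f)_p\asymp\|f-S_\nu(f)\|_p^p\asymp\sum_{k>\nu}k^{p-2}c_k^p,
\]
where the direct bound is Lemma \ref{lem}(A) applied to $f-S_\nu(f)$ with $\beta_j=\sum_{s\ge\max(j,\nu)}|\Delta a_s|$ and the GM condition, while the lower bound follows from Lemma \ref{lem}(B) together with $E_\nu(f)_p\le\|f-S_\nu\|_p$. Plugging this into $n^{-lp}\sum_{\nu=0}^n(\nu+1)^{lp-1}E_\nu^p(f)_p$ and switching the order of summation produces, after the elementary estimates $\sum_{\nu=0}^{k-1}(\nu+1)^{lp-1}\asymp k^{lp}$ for $k\le n$ and $\asymp n^{lp}$ for $k>n$, exactly the discrete expression for $\omega_l^p(f,1/n)_p$.

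The main obstacle is not any single estimate but the careful bookkeeping across the two regimes $\nu\le 1/t$ and $\nu>1/t$ when interchanging the order of integration/summation; one must also keep track of boundary terms like $t^{lp}\nu^{-p}$ to ensure that $\nu^{-p}$ dominates $t^p$ away from the threshold. Once this is handled, everything reduces to the discrete equivalence from Theorem \ref{t4}.
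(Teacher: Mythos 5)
Your treatment of the first equivalence is sound and is essentially the paper's intended route: the theorem is stated as a consequence of \eqref{pl}, and the natural proof is exactly what you do -- substitute the discrete characterization of $\omega_{l+1}^p(f,u)_p$ into $t^{lp}\int_t^1u^{-lp}\omega_{l+1}^p(f,u)_p\,\frac{du}{u}$, interchange sum and integral, and compare with the discrete characterization of $\omega_l^p(f,t)_p$ (the continuous analogue is the proof of Theorem \ref{Marchaud}). Your intermediate ``$\asymp$''s are not valid termwise near $\nu\approx 1/t$ and at $\nu=1$ (e.g.\ $\int_t^{1/\nu}u^{p-1}du=\frac1p(\nu^{-p}-t^p)$ degenerates at the threshold, and $\int_{1/\nu}^1u^{-lp-1}du$ vanishes for $\nu=1$), but since the two pieces compensate each other when added, the total is fine; you flag this bookkeeping issue yourself.

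The ``in particular'' part, however, contains a genuine gap: the pointwise equivalence $E_\nu^p(f)_p\asymp\sum_{k>\nu}k^{p-2}c_k^p$ you want to establish is false for $p\neq 2$, and it does not follow from Lemma \ref{lem}(A) plus the GM condition. What that argument actually gives (exactly as in the paper's proof of Theorem \ref{t4}) is $\|f-S_\nu\|_p^p\ll\sum_{k\ge[\nu/c]}k^{p-2}c_k^p$: the GM condition $\sum_{s\ge\nu}|\Delta a_s|\ll\sum_{s\ge[\nu/c]}a_s/s$ inevitably drags in coefficients with indices in $[\nu/c,\nu]$, and this range cannot be discarded. Moreover the two-sided claim itself fails: if the coefficients contain a block $a_k\asymp B$ on $[N,2N]$ (which is compatible with GM, and such blocks can be repeated along a fixed $f$ with rapidly decreasing heights), then for $\nu=2N-1$ one has $E_\nu(f)_p\asymp\|B\cos 2Nx+\cdots\|_p\asymp B$, whereas $\sum_{k>\nu}k^{p-2}a_k^p\asymp B^pN^{p-2}$; the ratio $N^{p-2}$ is unbounded (or vanishing) along this subsequence when $p\neq2$. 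The final statement nevertheless holds because this discrepancy lives only at such boundary indices and is washed out by the weighted summation over $\nu$, but that must be argued differently: either derive the best-approximation form from the already-proved Marchaud-type equivalence via the direct/inverse machinery (Jackson $E_\nu(f)_p\ll\omega_{l+1}(f,1/(\nu+1))_p$ together with the realization/Stechkin-type estimates, i.e.\ the argument the paper invokes through \cite[(5.7), (5.8)]{DDT} at the end of the proof of Theorem \ref{Marchaud}; here the extra term $n^{-lp}\|f\|_p^p$ is absorbed since, by \eqref{pl}, $\|f\|_p^p\ll n^{lp}\omega_l^p(f,1/n)_p$), or keep the correct one-sided bounds ($E_\nu^p\ll\sum_{k\ge[\nu/c]}k^{p-2}c_k^p$ and the Lemma \ref{lem}(B) lower bound in block form) and redo the summation, checking that the extra range $[\nu/c,\nu]$ contributes only quantities already dominated by $\omega_l^p(f,1/n)_p$.
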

Note that similar equivalence results for continuous functions were obtained in \cite[Ths.~5.1, 5.2]{my-jat}.

\subsection{A characterization of the Besov spaces}

For $1\le p\le \infty$ and $\tau, r>0$, define the Besov space
${B^r_{p,\tau}(\mathbb{R}^d)}$ as the collection of functions $f\in
L^p(\mathbb{R}^d)$ such that
\[
\|f\|_{B^r_{p,\tau}(\mathbb{R}^d)} = \|f\|_{L^p(\mathbb{R}^d)} +
\left(\int_0^1\Bigl(\frac{\Omega_l(f,t)_p}{t^r}\Bigr)^\tau \frac{dt}{t}\right)^{1/\tau}<\infty,
\]
where $0<r<\theta l$. Similarly we define the Lipschitz space
$\mathrm{Lip}^{r}_{p}(\mathbb{R}^d)\equiv {B^r_{p,\infty}(\mathbb{R}^d)}$, i.e.,
\[
\|f\|_{\mathrm{Lip}^{r}_{p}(\mathbb{R}^d)} = \|f\|_{L^p(\mathbb{R}^d)} +
\sup_t \frac{\Omega_l(f,t)_p}{t^r},\qquad 0<r< \theta l.
\]

It turns out that it is possible to characterize functions from the Besov space
${B^r_{p,\tau}(\mathbb{R}^d)}$ in terms of growth properties of their Fourier transforms.

\begin{theorem}\label{besov}
Let $d\ge 1$, $1<\tau\le \infty$, and $\frac{2d}{d+1}<p\le \tau$. If $f\in
\wh{GM}{}^{d}\cap L^{p}(\R^{d})$ and $\wh{f}\ge 0$, then a necessary and
sufficient condition for $f\in B^r_{p,\tau}(\mathbb{R}^d)$ is
\begin{equation}\label{bes}
\int_{0}^{\infty} s^{r\tau+d \tau - d\tau/p-1} F_{0}^\tau(s) \,ds<\infty
\quad\text{if}\quad1<\tau<\infty
\end{equation}
and
\begin{equation}\label{bes1}
\sup_s s^{r+d - d/p} F_{0}(s)<\infty\quad\text{if}\quad\tau=\infty.
\end{equation}
\end{theorem}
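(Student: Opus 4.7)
The plan is to translate the Besov quasi-norm of $f$ into a weighted integral of the radial component $F_0$ of $\wh{f}$, via the pointwise equivalence in Corollary~\ref{eqiv}, and then to identify it with the integral in \eqref{bes} through Hardy-type inequalities. By Corollary~\ref{eqiv} with $q=p$ and polar coordinates,
\[
\Omega_l^p(f,t)_p \asymp A(t)^p + B(t)^p,
\]
where
\[
A(t)^p := t^{\theta l p}\!\!\int_0^{1/t}\!\! s^{\theta l p + dp - d - 1} F_0^p(s)\,ds,\qquad
B(t)^p := \int_{1/t}^\infty\!\! s^{dp - d - 1} F_0^p(s)\,ds,
\]
as already recorded in \eqref{bes2}. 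Since $\tau/p \ge 1$ and $A,B \ge 0$, the elementary relation $(a+b)^{\tau/p}\asymp a^{\tau/p}+b^{\tau/p}$ gives $\Omega_l^\tau(f,t)_p \asymp A(t)^\tau + B(t)^\tau$, so
\[
\|f\|_{B^r_{p,\tau}}^\tau \asymp \|f\|_p^\tau + \int_0^1 t^{-r\tau-1}A(t)^\tau\,dt + \int_0^1 t^{-r\tau-1}B(t)^\tau\,dt.
\]

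For the direction $\eqref{bes}\Rightarrow f\in B^r_{p,\tau}$, I would substitute $u=1/t$ in each integral. The $A$-piece becomes $\int_1^\infty u^{-(\theta l-r)\tau-1}\bigl(\int_0^u s^{\theta l p+dp-d-1} F_0^p\,ds\bigr)^{\tau/p}\,du$, to which the classical Hardy inequality applies (valid because $(\theta l-r)\tau > 0$ and $\tau/p \ge 1$), yielding the bound $C\int_0^\infty s^{r\tau+d\tau-d\tau/p-1} F_0^\tau(s)\,ds$. The $B$-piece becomes $\int_1^\infty u^{r\tau-1}\bigl(\int_u^\infty s^{dp-d-1} F_0^p\,ds\bigr)^{\tau/p}\,du$, and the dual Hardy inequality (valid because $r\tau > 0$) produces the same upper bound. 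The term $\|f\|_p^\tau$ is absorbed into this integral via the Pitt-type equivalence \eqref{aseqhff} combined with $s^{rp}$-dominance at zero.

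For the converse $f\in B^r_{p,\tau}\Rightarrow \eqref{bes}$, I would pass to a dyadic decomposition. Setting $A_k := \int_{2^k}^{2^{k+1}} s^{dp-d-1}F_0^p(s)\,ds$, the argument above (discretised at $t=2^{-j}$) gives $\|f\|_{B^r_{p,\tau}}^\tau \asymp \sum_k 2^{kr\tau}A_k^{\tau/p}$ by discrete Hardy. On the other hand, $\int_0^\infty s^{r\tau+d\tau-d\tau/p-1} F_0^\tau(s)\,ds$ also decomposes dyadically, and H\"older's inequality on each shell gives the upper bound $A_k^{\tau/p}\ll 2^{-kr\tau}\int_{2^k}^{2^{k+1}} s^{r\tau+d\tau-d\tau/p-1}F_0^\tau(s)\,ds$. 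Summing yields the converse inequality. The case $\tau=\infty$ is entirely analogous, with suprema replacing integrals: one shows $\sup_{0<t<1} t^{-r}(A(t)+B(t)) \asymp \sup_{s>0} s^{r+d-d/p} F_0(s)$ by the same splitting.

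The \textbf{main obstacle} I anticipate is verifying that all Hardy-type steps produce \emph{two-sided} equivalences. Classical Hardy gives only one direction; for the reverse, one must use that $F_0$ is general monotone and nonnegative. In particular, recovering pointwise or $L^\tau$-information about $F_0$ on $[2^k,2^{k+1}]$ from the $L^p$-mean $A_k^{1/p}$ would fail if $F_0$ were allowed to concentrate on a tiny subset of the shell. The $\wh{GM}{}^{d}$ structure, through $|F_0(z)| \ll \int_{z/c}^\infty |F_0(u)|/u\,du$ and its iterated form \eqref{GLT3-1}, precludes such concentration and is exactly the ingredient needed to close the loop.
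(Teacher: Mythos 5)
Your sufficiency direction is essentially the paper's: starting from the equivalence \eqref{bes2} (Corollary \ref{eqiv} in polar coordinates), you bound the two pieces by Hardy/H\"older inequalities and land exactly on the integral in \eqref{bes}; this part is fine (the remark about ``absorbing $\|f\|_p^\tau$'' is unnecessary, since $f\in L^p$ is a hypothesis and only the seminorm needs to be estimated).

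The necessity direction, however, has a genuine gap, and your own ``main obstacle'' paragraph identifies it without resolving it. The shell inequality you actually write, $A_k^{\tau/p}\ll 2^{-kr\tau}\int_{2^k}^{2^{k+1}} s^{r\tau+d\tau-d\tau/p-1}F_0^\tau(s)\,ds$, is Jensen/H\"older with $\tau/p\ge 1$ and therefore controls the Besov-side quantity $\sum_k 2^{kr\tau}A_k^{\tau/p}$ \emph{by} the integral in \eqref{bes} --- that is the sufficiency direction again, not the converse. What necessity requires is the reverse: the $L^\tau$ shell (or pointwise) size of $F_0$ must be controlled by $L^p$ tail means, and this is precisely where general monotonicity has to enter quantitatively; you state that the $\wh{GM}{}^{d}$ structure ``closes the loop'' but give no argument. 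The paper does this via the pointwise bound \eqref{bes4}, obtained from \eqref{estv} and H\"older: $F_0(s)\ll s^{d/p-d}\bigl(\int_{s/c}^\infty F_0^p(u)\,u^{dp-d-1}\,du\bigr)^{1/p}$. Raising this to the power $\tau$, inserting it into $\int_0^\infty s^{r\tau+d\tau-d\tau/p-1}F_0^\tau\,ds$ and splitting at $s=1$, the high-frequency part is dominated by $|f|_{B^r_{p,\tau}}^\tau$ because the inner tail integral is comparable to the second term of \eqref{bes2} at $t\asymp 1/s$, while the low-frequency part is bounded by $\bigl\||\xi|^{d(1-2/p)}\wh f\bigr\|_p^\tau\int_0^1 s^{r\tau-1}\,ds\ll\|f\|_p^\tau$ using \eqref{aseqhff} and $r>0$. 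Note also that your dyadic scheme, restricted to shells, does not address the portion of \eqref{bes} over $s\in(0,1)$ at all; that piece is not controlled by the Besov seminorm alone and needs exactly the GM pointwise bound plus \eqref{aseqhff}. The same missing step is what makes the case $\tau=\infty$ work, so ``entirely analogous'' inherits the gap.
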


\begin{proof}
\textit{The case of $1<\tau < \infty$}.
Let first \eqref{bes} hold.
By \eqref{bes2}, we get
\begin{multline*}
|f|_{B^r_{p,\tau}}\asymp
K_1+K_2+K_3:=
\int_0^1 t^{(\theta l-r)\tau-1}
\left(\int_{0}^{1} s^{\theta l p + dp-d-1} F_{0}^p(s) \,ds\right)^{\tau/p}
dt
\\
{}+
\int_1^\infty t^{(r-\theta l)\tau-1}
\left(\int_{1}^{t} s^{\theta l p + dp-d-1} F_{0}^p(s) \,ds\right)^{\tau/p}
dt
+
\int_0^1 t^{r\tau-1}
\left(\int_{1/t}^\infty s^{dp-d-1} F_{0}^p(s) \,ds\right)^{\tau/p}
dt.
\end{multline*}
Then by H\"{o}lder's inequality with parameters $\alpha=\tau/p$ and $\alpha'$, we get
\[
K_1 \ll
\int_{0}^{1} s^{r\tau+d \tau - d\tau/p-1} F_{0}^\tau(s) \,ds.
\]
By Hardy's inequalities (see e.g. \cite[p.~124]{ben}), we have
\[
K_2+K_3 \ll
\int_{1}^{\infty} s^{r\tau+d \tau - d\tau/p-1} F_{0}^\tau(s) \,ds.
\]
Hence, if \eqref{bes} holds, $f\in B^r_{p,\tau}(\mathbb{R}^d)$.

Let $f\in B^r_{p,\tau}(\mathbb{R}^d)$. By \eqref{bes4},
\[
F_0(s)^\tau \ll
s^{d\tau/p-d\tau}\left(\int_{s/c}^\infty {F_0^p(u)} {u}^{dp-d-1} \,du \right)^{\tau/p}.
\]
Therefore, making use of this, we have
\begin{multline*}
\int_{0}^{\infty} s^{r\tau+d \tau - d\tau/p-1} F_{0}^\tau(s) \,ds
\ll
\int_{1}^{\infty} s^{r\tau-1} \left(\int_{s}^\infty {F_0^p(u)} {u}^{dp-d-1} \,du \right)^{\tau/p} ds
\\
{}+
\int_0^{1} s^{r\tau-1} \left(\int_{s}^\infty {F_0^p(u)} {u}^{dp-d-1} \,du \right)^{\tau/p} ds
\ll
|f|_{B^r_{p,\tau}}
+ \bigl\||\xi|^{d(1-2/p)} \widehat{f}(\xi)\bigr\|_p^\tau \int_0^{1} s^{r\tau-1} ds.
\end{multline*}
Finally, since $\Bigl\||\xi|^{d(1-2/p)} \widehat{f}(\xi)\Bigr\|_p\ll \|f\|_p$
(see \eqref{aseqhff}), \eqref{bes} holds.

\smallbreak
\textit{The case of $\tau=\infty$}.
Let first \eqref{bes1} hold. Then by \eqref{bes2}, \eqref{bes1} yields
\[
\Omega_l^p(f,t)_p
\ll
t^{\theta l p} \int_{0}^{1/t} s^{\theta l p-rp-1} \,ds +
\int^{\infty}_{1/t} s^{-rp-1} \,ds \ll t^{rp},
\]
i.e., $f\in \mathrm{Lip}^{r}_{p}(\mathbb{R}^d)$.

On the other hand, if $f\in \mathrm{Lip}^{r}_{p}(\mathbb{R}^d)$, we use
\eqref{bes4} and \eqref{bes2}
\[
F_0^p(s) \ll
s^{d-dp} \int_{s/c}^\infty {F_0^p(u)} {u}^{dp-d-1} \,du \ll
s^{d-dp} \Omega_l^p(f, 1/s)_p \ll
s^{d-dp-rp},
\]
which is \eqref{bes1}.
\end{proof}

\subsection{Embedding theorems}
The following Sobolev-type embedding result for the Besov space with the limiting smoothness
parameter is well known: $B^r_{p,q}\hookrightarrow L^{q}$,
$r=d\bigl(\frac1p-\frac1q\bigr)$ (see, e.g., \cite[(8.2)]{peetre}). Theorem \ref{besov}
gives the sharpness of this result in the following sense.

\begin{corollary}
Let $d\ge 1$ and $\frac{2d}{d+1}<p< q<\infty$.
If $f\in \wh{GM}{}^{d}\cap L^{p}(\R^{d})$ and $\wh{f}\ge 0$, then
\begin{equation}\label{bes5}
f\in B^r_{p,q}(\mathbb{R}^d), \quad r=d\Bigl(\frac1p-\frac1q\Bigr) \quad\Longleftrightarrow\quad
f\in L^{q}(\mathbb{R}^d).
\end{equation}
\end{corollary}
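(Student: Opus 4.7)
The plan is to reduce the equivalence \eqref{bes5} to the weighted Pitt-type equivalence \eqref{aseqhff} applied at the exponent $q$, via the Besov characterization in Theorem \ref{besov}.

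\textbf{Step 1: Specialize Theorem \ref{besov}.} Since $1 < q < \infty$, Theorem \ref{besov} applies with $\tau = q$, provided $p \le q$, which is assumed. It says that $f \in B^{r}_{p,q}(\R^{d})$ if and only if
\[
\int_{0}^{\infty} s^{r q + d q - d q/p - 1}\,F_{0}^{q}(s)\,ds < \infty.
\]
Substituting $r = d\bigl(\tfrac{1}{p}-\tfrac{1}{q}\bigr)$, the exponent becomes
\[
r q + d q - d q/p - 1 = d q/p - d + d q - d q/p - 1 = d q - d - 1,
\]
so the Besov condition is equivalent to
\[
\int_{0}^{\infty} s^{d q - d - 1}\,F_{0}^{q}(s)\,ds < \infty.
\]

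\textbf{Step 2: Rewrite as a weighted $L^{q}$-integral.} Passing to polar coordinates, the last integral equals a constant multiple of
\[
\int_{\R^{d}} \bigl[|\xi|^{d(1-2/q)}\,|\wh{f}(\xi)|\bigr]^{q}\,d\xi,
\]
since $\bigl(|\xi|^{d(1-2/q)}\bigr)^{q}\cdot|\xi|^{d-1}=|\xi|^{dq-d-1}$. Thus $f \in B^{r}_{p,q}$ if and only if $\bigl\||\xi|^{d(1-2/q)}\wh{f}(\xi)\bigr\|_{q} < \infty$.

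\textbf{Step 3: Apply the equivalence \eqref{aseqhff} with parameter $q$.} Because $f \in \wh{GM}{}^{d}$, the function $F_{0}$ is non-negative, lies in $GM$, and satisfies the integrability condition \eqref{GMcond}; moreover $q > p > \tfrac{2d}{d+1}$, so $q$ lies in the admissible range for \eqref{aseqhff}. Therefore
\[
\bigl\||\xi|^{d(1-2/q)}\wh{f}(\xi)\bigr\|_{q} \asymp \|f\|_{q}.
\]
Combining with Step 2 yields $f \in B^{r}_{p,q}(\R^{d}) \iff f \in L^{q}(\R^{d})$, proving \eqref{bes5}.

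\textbf{Main point.} The calculation is essentially bookkeeping; the only thing to check carefully is that the hypotheses of \eqref{aseqhff} hold at the exponent $q$ (not just $p$). The assumption $f \in \wh{GM}{}^{d}$ guarantees $F_{0}\in GM$ together with the integrability \eqref{GMcond}, so the bidirectional estimate \eqref{aseqhff} is available for every exponent in $\bigl(\tfrac{2d}{d+1},\infty\bigr)$, and the inequality $q > p > \tfrac{2d}{d+1}$ places $q$ in this range.
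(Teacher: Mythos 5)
Your proposal is correct and follows essentially the same route as the paper: Theorem \ref{besov} with $\tau=q$ reduces membership in $B^{r}_{p,q}$ (with $r=d(1/p-1/q)$) to finiteness of $\bigl\||\xi|^{d(1-2/q)}\wh{f}(\xi)\bigr\|_{q}$, which by the equivalence \eqref{aseqhff} applied at the exponent $q$ (admissible since $q>p>\tfrac{2d}{d+1}$ and $F_{0}\in GM$, $F_0\ge 0$, satisfies \eqref{GMcond}) is equivalent to $f\in L^{q}(\R^{d})$. Your write-up merely makes explicit the exponent bookkeeping and the verification of hypotheses at $q$ that the paper leaves implicit.
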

\begin{proof}To show \eqref{bes5}, we combine Theorem \ref{besov} and
$\bigl\||\xi|^{d(1-2/p)} \widehat{f}(\xi)\bigr\|_p\asymp \|f\|_p$,
$\frac{2d}{d+1}<p<\infty$ (see \eqref{aseqhff}).
\end{proof}
Note that the embedding $B^r_{p,q}\hookrightarrow L^{q}$ is equivalent to the sharp (Ul'yanov) inequalities for moduli of smoothness in different metrics, as recently shown in \cite[Th. 2.4]{trebels}.

\end{document}